\begin{document}
\newtheorem{thm}{Theorem}[section]
\newtheorem{defn}[thm]{Definition}
\newtheorem{Conj}[thm]{Conjecture}
\newtheorem{ex}[thm]{Example}
\newtheorem{exs}[thm]{Examples}
\newtheorem{rem}[thm]{Remark}
\newtheorem{prop}[thm]{Proposition}
\newtheorem{lemma}[thm]{Lemma}
\newtheorem{cor}[thm]{Corollary}
\newtheorem{obs}[thm]{Observation}
\newtheorem{ax}[thm]{Axiom}
\newtheorem{ques}[thm]{Question}
\newtheorem{nota}[thm]{Notation}

\title{Locally compact,  $\w_1$-compact spaces}

\author{Peter Nyikos}
\address[Peter Nyikos]{Department of Mathematics, University of South Carolina, Columbia, SC 29208}
\email{nyikos@math.sc.edu}

\author{Lyubomyr Zdomskyy}
\address{Universit\"at Wien, Institut f\"ur Mathematik, Kurt G\"odel Research Center,
Kolingasse 14--16, 1090 Wien.}
\email{lzdomsky@gmail.com}
\urladdr{http://www.logic.univie.ac.at/$\tilde{\ }$lzdomsky/}

\def \frak{\mathfrak}
\def \cal{\mathcal}
\def \Bbb{\mathbb}
\def \a{\alpha}
\def \b{\frak b}
\def \B{\beta}
\def \c{\frak c}
\def \g{\gamma}
\def \d{\delta}
\def \l{\lambda}
\def \lg{\langle}
\def \rg{\rangle}
\def \s{\sigma}
\def \k{\kappa}
\def \w{\omega}
\def \z{\zeta}
\def \be{\beta}
\def \A{\aleph}
\def \L{\cal L}
\def \La{\Lambda}
\def \R{\mathbb R}
\def \P{\mathbb P}
\def \Q{\mathbb Q}
\def \G{\cal G}
\def \U{\cal U}
\def \V{\cal V}
\def \from{\leftarrow}
\def \flag{\restriction}
\def \nix{\emptyset}

\def\ddemo{\quad $\square$ \medskip}
\def \ob{\overline}
\def \ol{\overline}
\def \down{\downarrow}
\def \sm{\setminus}

\subjclass[2010]{Primary 54D15, 54D45.  Secondary 03E35, 54C10, 54D35}

\keywords{locally compact, $\w_1$-compact, normal, countably
compact, $\sigma$-countably compact, $\w$-bounded, PID}

\thanks{The second author would
like to thank  the Austrian Science Fund FWF (Grants I 1209-N25,  I 2374-N35 and I 3709-N35)
 for generous support for this research.}

\begin{abstract}
An $\w_1$-compact space is a space in which every closed discrete subspace
is countable.  We give various general conditions under which  a locally compact, $\w_1$-compact
space is {\it $\s$-countably compact, i.e.,} the union of countably many countably
compact spaces. These conditions involve very elementary properties.
\medskip

Many results shown here   
are independent of the usual (ZFC) axioms of set theory, and the
consistency of some  may involve large cardinals.
For example, it is independent of the ZFC axioms whether every locally
compact, $\w_1$-compact space of cardinality $\aleph_1$
is $\s$-countably compact. Whether $\aleph_1$ can be replaced with
$\aleph_2$ is a difficult unsolved problem. Modulo large
cardinals, it is also ZFC-independent whether every hereditarily normal,
or every monotonically normal, locally
compact, $\w_1$-compact space is $\s$-countably compact.
\medskip

As a result, it is also ZFC-independent whether there is a locally
compact, $\w_1$-compact Dowker space of cardinality $\A_1$, or
one that does not contain both an uncountable closed discrete
subspace and a copy of the ordinal space $\w_1$.
\medskip

\bigskip

Set theoretic tools used for the consistency results
 include the existence of a Souslin tree, the
Proper Forcing Axiom (PFA), and models
generically referred to as ``MM(S)[S]''. Most of the work is done by
the P-Ideal Dichotomy (PID) axiom, which holds in  the latter
two cases,
and which requires no large cardinal axioms when directly applied to topological
spaces of cardinality $\A_1$, as it is in several theorems.
\end{abstract}

\maketitle

\medskip

\section{\label{intro} Introduction}

A {\it space of countable extent}, also called an  {\it $\w_1$-compact
space}, is one in which every closed discrete subspace is countable.
Obvious examples of  $\w_1$-compact spaces are countably compact spaces (because
in them every closed discrete subspace is finite), and
{\it $\s$-countably compact spaces}, {\it i.e.,} the union of
countably many countably compact spaces. On the
other hand,
an elementary application of the Baire Category Theorem shows that
the space of irrational numbers with the usual topology is not
$\s$-countably compact, but like every other separable metrizable
space, it is $\w_1$-compact.
\medskip

The situation is very different when it comes to locally
compact spaces.
In an earlier version of this paper due to the first author, he asked:

\begin{ques} \label{1.1}
Is there a ZFC example of a locally compact, $\w_1$-compact space
of cardinality $\A_1$ that is not $\s$-countably compact? one that is
normal? \end{ques}

Here too, local compactness makes a big difference: without it,
the space of irrationals is a counterexample under CH, while ZFC is
enough to show any cardinality $\A_1$ subset of a Bernstein set
is a counterexample.
\bigskip

As it is, the second author showed that the answer to  Question \ref{1.1}
is negative; see Section 2. On the other hand, both the Kunen line
and a Souslin tree with the usual topology are consistent locally compact
normal examples for Question \ref{1.1}. In the case of the Kunen line,
its hereditary separability clearly implies $\w_1$-compactness,
and its hereditary realcompactness implies that every countably compact
subspace is compact and therefore countable. The case of a Sousln tree will
be shown at the end of  Section 5. A third consistent example is given in
\cite{Nyclub}:

\begin{thm} \label{club} Assuming $\clubsuit$, there is
a locally compact, locally countable (hence first countable)
 $\w_1$-compact space of
cardinality $\aleph_1$ which is not $\s$-countably compact. \end{thm}

\bigskip

It is not known whether this example is normal. However, in Section 4 we will
construct an example with all the stated properties of Theorem \ref{club} and
which is monotonically normal.

\medskip

Monotonically normal spaces are, informally speaking, ``uniformly normal''
[see Definition \ref{monO} below]. They are hereditarily normal, and this
theorem gives another independence result when combined with:

\begin{thm}\label{mm} In MM(S)[S] models,
every hereditarily normal,
locally compact, $\w_1$-compact space is $\s$-countably compact.
\end{thm}

 An even stronger theorem will be shown in Section \ref{T5}
which includes the statement that the PID is enough to show
Theorem \ref{mm} for monotonically normal spaces. It also puts some limitations
on what kinds of Dowker spaces (that is, normal spaces $X$
such that $X \times [0,1]$ is not normal) are possible
if one only assumes the usual (ZFC) axioms of set theory.

\bigskip

\medskip

In the light of the negative answer to Question \ref{1.1},
it is natural to ask:
\bigskip

\noindent{\bf Problem 1.}
 {\it What is  the least cardinality of
 a locally compact, $\w_1$-compact space which is not
$\s$-countably compact? one that is normal?}
\medskip

The best ZFC result so far was shown by the first author \cite{N7}:

\begin{ex}\label{minb}There is a locally compact, normal,
$\w_1$-compact space of cardinality  $\frak b$ that is not $\s$-countably compact. \end{ex}

Previously, the best upper bound for the minimum was
$\frak c$, using one of E.K. van Douwen's ``honest submetrizable''
examples \cite{vD1}. However,  Example \ref{minb} still leaves a lot unsaid. See Section 5 for a discussion,
especially of the following ``echo'' of Question \ref{1.1}.

\medskip

\noindent
{\bf Problem 2.} {\it Is there a ZFC example of a  locally compact,
$\w_1$-compact space of cardinality $\A_2$ that is not
$\s$-countably compact? one that is normal?}
\bigskip

 The last section
gives more information about Example  \ref{minb}  and
about a related problem and result of Eric van Douwen,
under the assumption of $\b = \c$.
\medskip

In between,
Section 6 gives some interesting counterpoints to Problem 1 by discussing questions about the greatest cardinality
of a locally countable, normal, countably compact space. Local countability may seem like a very specialized property,
but it actually holds in most of our examples, including Example \ref{minb} under CH,
and it easily implies local compactness in countably compact spaces, and with it, first countability.
\bigskip

The individual sections are only loosely connected with each other, and each can
be read with minimal reliance on any of the others.
\medskip

All through this paper, ``space'' means ``Hausdorff topological space.''
All of the spaces described are locally compact, hence Tychonoff; and
all are also normal, except for a consistent example at the end.

\section{\label{A1}{The cardinality $\A_1$ case}}

 The P-Ideal Dichotomy (PID) plays a key role in this section and the following
one. It has to do with the following concept.

\begin{defn}\label{ideal}
\rm A family $\cal I\subset \mathcal P(X)$ is an \it{ideal on a set $X$},\ \rm
 if it is closed under finite unions, contains all singletons, and is closed under taking subsets of
its elements.
 An ideal $\mathcal I$ is a \it{P-ideal} \rm if
for every countable subfamily $\cal Q$ of $\cal I$, there exists $I \in \cal I$
such that $Q \subset^* I$ for every $Q \in \cal  Q$. Here $Q \subset^* I$
means that $Q \sm I$ is finite.\end{defn}
\medskip

The PID states that, for every P-ideal $\cal I$ of subsets of a set $X$,
either
\smallskip

(A) there is an uncountable $A \subset X$ such that every
countable subset of $A$ is in $\cal I$, or
\smallskip

(B) $X$ is the union of countably many sets $\{B_n : n \in \w\}$
such  that $B_n \cap I$ is finite for all $n$ and all $I \in \cal I.$
\medskip

The routine proofs of the  next lemma and theorem were given in \cite{EN}:

\begin{lemma} \label{a1} Let $X$ be a locally compact Hausdorff space.
The countable closed discrete subspaces of $X$ form a P-ideal if,
and only if, the extra point $\infty$ of
the one-point compactification $X + 1$ of $X$
is an $\a_1$-point; that is, whenever $\{\s_n : n \in \w\}$
is a countable family of sequences converging to $\infty$,
then there is a sequence $\s$ converging to $\infty$ such
that $ran(\s_n) \subseteq^* ran(\s)$ for all $n$.
\end{lemma}

The key is that an ordinary sequence in $X$
converges to the extra point of $X + 1$
if, and only if, its range is a closed discrete subspace of $X$.

\medskip

\begin{thm} \label{tri1} Assume the PID axiom. Let $X$ be a
locally compact space. Then at least one of the following is true:

$(1^-)$ $X$ is the union of countably many subspaces $Y_n$ such
that each sequence in $Y_n$ has a limit point in $X$.

$(2)$ $X$ has an uncountable closed discrete subspace

$(3^+)$ The extra point of $X + 1$ is not an $\alpha_1$-point.
\end{thm}

The key here is that (A) goes with (2), (B) goes with $(1^-$), and
$(3^+)$ is equivalent to the countable closed discrete subspaces failing
to form a P-ideal, by Lemma \ref{a1}.
\bigskip

The following is well known:

\begin{lemma} \label{b} If $X$ is a space of character $< \frak b$, then
every point of $X$ is an $\a_1$-point.\end{lemma}

We now have a negative answer to the second part of
Question \ref{1.1}.

\begin{thm} \label{Tri2} Assume the PID and $\frak b > \A_1$.
Then every locally compact, $\w_1$-compact, normal space
of cardinality $\A_1$ is
$\s$-countably compact. \end{thm}

\begin{proof} In a locally compact space, character $\le$ cardinality.
Lemmas \ref{a1} and \ref{b} and $\w_1$-compactness give us alternative
$(1^-)$ of Theorem \ref{tri1}. The closure of each $Y_n$ is easily
seen to be pseudocompact (i.e., every continuous real-valued
function is bounded). In
a normal space, every closed pseudocompact subspace is countably compact, cf.
 \cite[17J 3]{W}.
So the closures of the $Y_n$ witness that $X$ is $\s$-countably compact.
\end{proof}

\medskip

As shown in \cite{EN}, the hypothesis of normality in Theorem \ref{Tri2}
can be greatly weakened to ``Property wD''. Also, the proof of
Theorem \ref{Tri2} clearly extends to show that every normal,
locally compact, $\w_1$-compact space of cardinality (or even Lindel\"of number) $< \frak b$
is $\s$-countably compact. However, this may be a very limited improvement:
the PID implies $\frak b \le \aleph_2$. This is  a theorem of Todor\v{c}evi\'c,
whose proof may be found in \cite{MZ}.

\bigskip
The axioms used in Theorem \ref{Tri2} follow from the
Proper Forcing Axiom ($PFA$) and hold in $PFA(S)[S]$ models.
Each of these models is formed from a $PFA(S)$ model by forcing with
a coherent Souslin tree $S$ that is part of the definition
of what it means to be a  $PFA(S)$ model. The rest of the definition
states that
every proper poset $P$ that does not destroy $S$ when it
is forced with, has the following property. For every
family of $\le \A_1$ $\uparrow$-dense, $\uparrow$-open sets,
there is a $\downarrow$-closed, $\uparrow$-directed
subset of $P$ that meets them all.\footnote{The notation is as in \cite{F}. It is the ``Israeli" notation, whereby stronger conditions are larger.
 The topology to which
it refers is the one where each $p \in P$ has the one basic
neighborhood $\{t: t \ge p\}$.}
 The PFA is similarly
defined by omitting all mention of $S$. What remains is very similar
to the well-known definition of
Martin's Axiom $(MA)$; the only difference is that $MA$ uses
``ccc'' instead of ``proper.''
\bigskip

In this paper, we will use a slight abuse of language
with expressions like $PFA(S)[S]$ and $MM(S)[S]$ as though they
were axioms. The latter is defined like the former, but
with ``semi-proper'' in place of ``proper.''

\bigskip
For our negative answer to the first half of Question \ref{1.1}, we
needed a strengthening of $\frak b > \A_1$ which holds in models of
$PFA(S)[S]$ and $MM(S)[S]$.


\begin{lemma} \label{Z} Let $X$ be a $T_3$ space of weight
$ < \min\{\mathfrak b,\mathfrak s\}$,
 and let $Y \subset X$.
  Suppose that  no $Z\in [Y]^\w$
is closed discrete in $X$. Then there exists a
countably compact $Y'\subset X$ containing $Y$.
\end{lemma}

\begin{proof}
Using ideas from \cite{Boo74},
for every $Z\in [Y]^\w$ we can find $S_Z\in [Z]^\w$ which  converges
to some $x_Z\in X$. More precisely,
let $\mathcal Z$ be the family of infinite elements of $\{Z\cap U:U$ is a basic open subset of $X\}$.
Since $|\mathcal Z|<\mathfrak s$, $\mathcal Z$ is not a splitting subfamily of $[Z]^\w$, so there exists $S_Z \subset Z$ unsplit by $\mathcal Z$. Then $S_Z$
has at most one limit point in $X$: If $x_1$ and $x_2$ were two of them, pick open disjoint basic
neighbourhoods $U_1\ni x_1$ and $U_2\ni x_2$. Then both of the  sets
$U_1\cap S_Z=(U_1\cap Z)\cap S_Z$ and $U_2\cap S_Z=(U_2\cap Z)\cap S_Z$ are infinite, and hence each of $U_1\cap Z,U_2\cap Z\in\mathcal Z$  splits $S_Z$, a contradiction. Since $S_Z$ has a limit point $x_Z\in X$, it follows
that $S_Z$ converges to $x_Z$.

Let $Y' =Y\cup\{x_Z:Z\in [Y]^\w\}$.
{\it We claim that $Y'$ is countably compact.} Indeed, otherwise there
exists a countable $T=\{t_n: n\in\w\}\subset Y'$ which is closed
discrete in $Y'$. Note that $Z = T \cap Y$  is finite, otherwise $x_Z$
would be an accumulation point of $T$ in $Y'$.
So we may assume that $T\cap Y=\emptyset$. For every $n$ fix
$Z_n \in [Y]^\w$ such that $t_n=x_{Z_n}$ and set $S_n:=S_{Z_n}$.
Let $W_n\ni
t_n$ be a neighborhood of $t_n$ in $Y'$ such that
$W_n\cap W_m=\emptyset$ for all $n\neq m$. [In a $T_3$ space, every countable closed
discrete subspace extends to a disjoint open collection.]
\medskip

For every $y\in Y'\setminus T$ find a basic open neighborhood $U_y$ of $y$ in
$Y'$ such that $\overline{U_y}\cap T=\emptyset$. Given an
injective enumeration $\langle z^n_k:k\in\w\rangle$ of $S_n$ for every $n \in \w$,
let $f_{U_y} :\w \to \w$
be such that $\{z^n_k:k\geq f_{U_y}(n)\}\cap\overline{U_y}=\emptyset$.
Since the weight of $Y'$ is $< \mathfrak b$, there are $<\mathfrak b$ of $U_y$'s, and hence there exists $f : \w \to \w$ such that
$f_{U_y}<^* f$ for all $y\in Y' \setminus T$. Now
$Z_\w = \{z^n_{f(n)}:n\in\w\}\subset Y$ has no accumulation points
in $Y'$, because $Z_\w \cap W_n = \{z^n_{f(n)}\}$ for all $n\in\w$, and $Z_\w \cap U_y$
is finite for each $y \in Y'$. In other words,  $x_{Z_\w}$ does not exist.
This contradiction
implies that $Y'$ is countably compact.
\end{proof}

\medskip
For locally compact spaces,
we can replace the hypothesis in the statement of Lemma \ref{Z} with
 $\ob{Y}$ being hereditarily of Lindel\"of degree $< \frak p$. That
is, if $S \subset \ob{Y}$, and $\cal U$ is a relatively open cover
of $S$, then $\cal U$ has a subcover of cardinality $< \frak p$.

Together with regularity this yields that $\ob{Y}$ has character
$<\mathfrak p$, and hence for any $Z\in [Y]^\w$ and limit point
$x_Z\in X$ of $Z$ there is a convergent to $x_Z$ sequence
$S_Z\in[Z]^\w$. This is a well-known property of spaces with
character $<\mathfrak p$.

Regarding the second part of the proof, the existence of $\cal
U'\subset \cal U = \{U_y: y \in S\}$ of  cardinality $< \frak p$
covering  $S = Y'\sm T$, is enough to make the argument go through.
\bigskip

Now we can finish answering Question \ref{1.1}.

\begin{thm} \label{Tri1}
Assume the PID and $\min\{\mathfrak b,\mathfrak s\} > \A_1$.
Then every locally compact, $\w_1$-compact space
of weight $\A_1$ is
$\s$-countably compact. \end{thm}

\begin{proof} Again Lemmas \ref{a1} and \ref{b} and $\w_1$-compactness
give us alternative $(1^-)$ of Theorem \ref{tri1}. The rest is
clear from Lemma \ref{Z}. \end{proof}

\bigskip

Returning to Theorem \ref{Tri2}, its proof also gives:

\begin{thm} \label{dowker} Assume the PID. Then every locally
compact, $\w_1$-compact normal space of cardinality $< \b$
is countably paracompact.\end{thm}

\begin{proof}
 A normal space $X$ is countably paracompact if, and only if, for
each descending sequence of closed sets $\langle F_n\rangle_{n=1}^\infty$
with empty intersection, there is a sequence of open sets  $\langle U_n\rangle_{n=1}^\infty$
with empty intersection, with $F_n \subset U_n$ for all $n$.
If $X$ is a countable union of countably compact subsets $C_m$,
as in Theorem \ref{Tri2}, then in such
a sequence of closed sets $F_n$, we can only have $F_n \cap C_m \ne \nix$ for finitely
many $n$. [Otherwise, countable compactness of $C_m$ implies
$\bigcap_{n=1}^\infty C_m \cap F_n \ne \nix.$]  In any Tychonoff
space, every pseudocompact subspace, and hence every countably compact subspace,
has pseudocompact closure, and every normal, pseudocompact space is countably
compact; and  so the complements of the sets $\ob{C_m}$
form the desired sequence of open sets.
\end{proof}
\bigskip

The equivalence in the preceding proof is shown in \cite[21.3]{W} and
 is due to Dowker, who also showed its
equivalence with $X \times [0,1]$ being normal. In honor of his pioneering work,
normal spaces that are {\it not} countably paracompact are called ``Dowker spaces.''
 Theorem \ref{dowker} thus implies the consistency of
 there being no locally
compact, $\w_1$-compact Dowker spaces of cardinality $\A_1$.
Specialized
though this fact is, it is one of the few theorems as to what kinds of Dowker
spaces are unattainable in ZFC. Another interesting such result was obtained
 by Dow and Tall \cite{DT}:

\begin{thm} \label{FT}  If $MM(S)[S]$, then every locally compact, normal,
non-paracompact space of Lindel\"of number $\le \A_1$ includes a
perfect preimage of $\w_1$. \end{thm}

Alan Dow has improved ``perfect preimage'' to ``copy'' [private communication].
Combining this with  Theorem \ref{dowker}, we have:

\begin{cor}  If $MM(S)[S]$, then every locally compact
Dowker space of cardinality $\le \A_1$ includes both a
copy of $\w_1$ and an uncountable closed discrete subspace.
 \end{cor}

 This corollary may be vacuous; in fact, the following problem is open:
\medskip

\noindent{\bf Problem 3.} Is there a model of PID + $\frak c = \A_2$
in which there is a locally compact Dowker space of cardinality
$\aleph_1$?

\bigskip
The consistency of PID was shown using forcing from a ground model with a
supercompact cardinal. There are versions for spaces of weight
$\aleph_1$, hence all locally compact spaces of cardinality $\aleph_1$, which
require only the consistency of ZFC. One restricted version of the PID axiom
is designated $(*)$ in \cite{AT}, and is adequate
 for Theorem \ref{dowker}. But it is still an open problem whether the main
results of our next section are ZFC-equiconsistent.

\section{\label{T5} When hereditary normality implies $\s$-countable
compactness}

For the main  theorem of this section, we recall the following concepts:

\begin{defn}\label{scwH} \rm  Given a subset $D$ of a set $X$,
an {\it expansion} of $D$ is a family $\{U_d : d \in D\}$ of subsets of $X$
such that $U_d  \cap D = \{d\}$ for all $d \in D$.
 A space $X$ is {\it [strongly] collectionwise
Hausdorff }  (abbreviated {\it [s]cwH})
if every closed discrete
subspace has an expansion to a disjoint [{\it resp.} discrete] collection
of open sets.
\smallskip

The properties of {\it $\w_1$-[s]cwH} only require taking care of those
$D$ that are of cardinality $\le \w_1$.
 \end{defn}

A well-known, almost trivial  fact is that every normal, cwH space
is scwH:  if $D$ and $\{U_d : d \in D\}$ are as in \ref{scwH}, let
$V$ be an open set containing $D$ whose closure is in
$\bigcup \{U_d : d \in D\}$; then $\{U_d \cap V : d \in D\}$
is a discrete open expansion of $D$.
\bigskip

As is well known \cite[2.1.7]{E}, a space is hereditarily
 normal if (and only if) every open subspace is normal.
A similar statement holds for the cwH property:

\begin{thm} \label{openh}The following are equivalent for a space $X$.
\smallskip

\noindent
(1) $X$ is hereditarily cwH.
 \smallskip

\noindent
(2) Every open subspace of $X$ is cwH.
\smallskip

\noindent
(3) Every discrete subspace of $X$ expands to a disjoint collection of open sets.
\end{thm}

\begin{proof} To show {\it (2)} implies {\it (3)}, let $D$ be a discrete subspace of $X$. Then $D' = \ol{D} \sm D$ is a closed subset of $X$,
and $D$ is a closed discrete subspace of the open subspace $Y = X \sm (\ol{D} \sm D) $ of $X$. Clearly, any expansion of $D$ to disjoint open sets in $Y$
is also a disjoint open expansion in $X$.

\medskip
\noindent
To show  {\it (3)} implies {\it (1)}, let $S$ be a subspace of $X$, and let $D$ be a relatively closed, discrete subspace of $S$. Then $D$
is also a discrete subspace of $X$, and the trace on $S$ of a disjoint open expansion of $D$ in $X$ is a disjoint relatively open expansion of $D$ in $S$.
\end{proof}
\medskip

The analogous theorems for  scwH,  $\w_1$-scwH and $\w_1$-cwH also hold, and the proofs are similar.
We will be using the one for  $\w_1$-cwH in the main theorem of this section.

\bigskip

The following class of normal spaces plays a big role in this section and in the following one.

\begin{defn} \label{monO} \rm A space $X$ is
{\it  monotonically normal} if there is
an operator $G(\_,\_)$ assigning to each ordered pair
$\langle F_0, F_1\rangle$ of disjoint closed subsets an open set
$G(F_0, F_1)$ such that \newline
(a) $F_0 \subset G(F_0, F_1)$\newline
(b) If $F_0 \subset F_0'$ and $F_1' \subset F_1$ then $G(F_0, F_1) \subset G(F_0', F_1')$\newline
(c) $G(F_0, F_1) \cap G(F_1, F_0) = \nix.$
\end{defn}

Monotone normality is a hereditary property; that is, every subspace inherits
the property. This is not so apparent from this definition, but it follows almost
immediately from the following criterion, due to Borges \cite{B}.

\begin{thm} \label{bor} A space is monotonically normal if, and only if,
there is an assignment of an open neighborhood  $h(x, U) =: U_x$
containing $x$ to
each pair $(x, U)$
such that $U$ is an open neighborhood of $x$,
and such that, if $U_x \cap V_y \ne \nix,$
then either $x \in V$ or $y \in U$.\end{thm}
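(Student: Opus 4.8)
The plan is to prove Borges's characterization (Theorem~\ref{bor}) by establishing both directions of the equivalence.

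First I would handle the forward direction: assuming $X$ is monotonically normal with operator $G(\_,\_)$, I must construct the neighborhood assignment $h(x,U)=U_x$. The natural candidate is to apply $G$ to the singleton $\{x\}$ and the complement of $U$. Specifically, given a point $x$ and an open neighborhood $U$ of $x$, the set $X\sm U$ is closed and disjoint from the closed set $\{x\}$, so I would define $U_x = G(\{x\}, X\sm U)$. By axiom~(a) this is an open set containing $x$, and since $G(\{x\}, X\sm U)\cap G(X\sm U,\{x\})=\nix$ by axiom~(c), the set $U_x$ avoids $G(X\sm U,\{x\})$, which contains $X\sm U$; hence $U_x\subseteq U$, confirming it is a neighborhood assignment. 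The crux is verifying the compatibility condition: if $U_x\cap V_y\ne\nix$, then $x\in V$ or $y\in U$. I would argue by contraposition. Suppose $x\notin V$ and $y\notin U$. Then $x\in X\sm V$ and $y\in X\sm U$, so $\{x\}\subset X\sm V$ and $\{y\}\subset X\sm U$. Using the monotonicity axiom~(b), I would compare $G(\{x\},X\sm U)$ with $G(X\sm V,\{y\})$: since $\{y\}\subset X\sm U$ gives $G(\{x\},X\sm U)\subset G(\{x\},\{y\})$ and $\{x\}\subset X\sm V$ gives $G(X\sm V,\{y\})\supset G(\{x\},\{y\})$, while $V_y=G(\{y\},X\sm V)$, disjointness of $G(\{x\},\{y\})$ and $G(\{y\},\{x\})$ from axiom~(c) forces $U_x\cap V_y=\nix$.

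Next I would prove the reverse direction: given the neighborhood assignment $h$ satisfying the stated separation property, I must build an operator $G$ satisfying (a), (b), (c). For disjoint closed sets $F_0,F_1$, each point $x\in F_0$ has $U=X\sm F_1$ as an open neighborhood, so I can form $U_x=h(x,X\sm F_1)$; the natural definition is
$$G(F_0,F_1)=\bigcup\{h(x,X\sm F_1): x\in F_0\}.$$
Axiom~(a) is immediate since $x\in h(x,X\sm F_1)$, and axiom~(b) follows because enlarging $F_0$ adds more sets to the union while shrinking $F_1$ enlarges each complement $X\sm F_1$ (and the assignment respects inclusion of neighborhoods only implicitly, so care is needed here). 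For axiom~(c), I would take a point $z\in G(F_0,F_1)\cap G(F_1,F_0)$, so $z\in h(x,X\sm F_1)\cap h(y,X\sm F_0)$ for some $x\in F_0$, $y\in F_1$; the separation property then yields $x\in X\sm F_0$ or $y\in X\sm F_1$, contradicting $x\in F_0$ and $y\in F_1$.

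The main obstacle is the monotonicity axiom~(b) in the reverse direction. The difficulty is that shrinking $F_1$ to $F_1'$ changes the neighborhood $X\sm F_1$ to the larger $X\sm F_1'$, and there is no a priori guarantee that $h(x,X\sm F_1)\subseteq h(x,X\sm F_1')$ because the neighborhood assignment $h$ is not assumed monotone in its second argument. I expect the fix is to strengthen or normalize the assignment first, replacing $h(x,U)$ by an intersection or by $h(x,U)$ over a canonically chosen neighborhood, so that the resulting operator is genuinely monotone; this is the delicate bookkeeping step that makes Borges's theorem nontrivial, and I would devote the most attention to verifying that such a modification preserves the separation property while yielding inclusion~(b).
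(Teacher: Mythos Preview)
The paper does not actually prove Theorem~\ref{bor}; it is quoted as a known criterion due to Borges and used as a black box. So there is no ``paper's own proof'' to compare against, and what you have written is the standard argument one finds in the literature (e.g., Heath--Lutzer--Zenor or Gruenhage's survey).

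Your forward direction is correct: with $U_x = G(\{x\}, X\sm U)$, the chain
\[
U_x \subset G(\{x\},\{y\}) \quad\text{and}\quad V_y \subset G(\{y\},\{x\})
\]
together with axiom (c) gives the disjointness when $x\notin V$ and $y\notin U$. (Your prose around this step is slightly tangled, but the mathematics is right.)

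In the reverse direction you have correctly located the only nontrivial point: monotonicity (b) fails for $G(F_0,F_1)=\bigcup_{x\in F_0} h(x,X\sm F_1)$ because $h$ need not be monotone in its second argument. The fix you allude to can be made completely explicit and is simpler than ``normalizing over a canonically chosen neighborhood'': replace $h$ by
\[
H(x,U)=\bigcup\{\,h(x,V): V \text{ open},\ x\in V\subseteq U\,\}.
\]
This $H$ is visibly monotone in $U$, still satisfies $x\in H(x,U)\subseteq U$ (after first intersecting $h(x,V)$ with $V$ if necessary), and it retains the Borges separation property: if $H(x,U)\cap H(y,V)\ne\emptyset$ then some $h(x,U')\cap h(y,V')\ne\emptyset$ with $U'\subseteq U$, $V'\subseteq V$, whence $x\in V'\subseteq V$ or $y\in U'\subseteq U$. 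With $H$ in place of $h$, your definition of $G$ now satisfies (b), and your verifications of (a) and (c) go through unchanged. So the proposal is essentially complete once this one line is added.
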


Every monotonically normal space is (hereditarily) countably paracompact \cite[Theorem 2.3]{R0}
and (hereditarily) scwH: the Borges criterion easily give cwH, and normality does the rest.
\bigskip

The main theorem of this section, Theorem \ref{Tri3}, also involves the following
concepts.

\begin{defn} \rm An 
{\it $\w$-bounded space} is one in which
every countable subset 
has compact closure. A space is {\it $\s$-$\w$-bounded}
if it is the union of countably many $\w$-bounded
subspaces. \end{defn}

Clearly, every  $\w$-bounded space is  countably compact.
Theorem \ref{Tri3} below makes use of the following axiom, which
is shown in \cite{EN} to follow from PID and whose numbering
is aligned with that of Theorem \ref{tri1}:

\begin{ax} \label{tri2} Every
locally compact  space has either:
\smallskip

\noindent
$(1)$ A countable collection of $\w$-bounded subspaces
whose union is the whole space or
\smallskip

\noindent
$(2)$
 An uncountable closed discrete subspace or
\smallskip

\noindent
$(3)$ A countable subset with non-Lindel\"of closure.
\end{ax}

Of course, $(1)$ and $(2)$ are mutually exclusive, but each is compatible
with $(3)$.
\bigskip

Part $(iii)$ of our main theorem is the promised strengthening
of Theorem \ref{mm}.

\begin{thm} \label{Tri3} Let $X$ be a locally compact, $\w_1$-compact
space. If either
\smallskip

\noindent
$(i)$ $X$ is monotonically normal and the P-Ideal Dichotomy (PID) axiom holds, or
\smallskip

\noindent
$(ii)$ $X$ is hereditarily $\w_1$-scwH, and either
PFA or PFA(S)[S] hold, or
\smallskip

\noindent
$(iii)$ $X$ is hereditarily normal, and MM(S)[S] holds,
\smallskip

\noindent THEN $X$ is $\s$-$\w$-bounded, and is either Lindel\"of
(and hence $\s$-compact) or contains a copy of $\w_1$.
\end{thm}

\begin{proof}
The $\s$-$\w$-bounded property in case $(i)$ follows from the fact that,
in a monotonically normal space, every countable subset has
Lindel\"of closure \cite{O}, and from Axiom \ref{tri2}.

\medskip

The PID, and hence Axiom \ref{tri2}, holds in any model of $PFA$ or
$PFA(S)[S]$,
so $\s$-$\w$-boundedness in case $(ii)$ follows from the following two facts.
First, every point (and hence every countable subset) of a locally
compact space has an open Lindel\"of neighborhood (see \cite[Lemma 1.7]{N4}).
Second, in these models, every open Lindel\"of subset of a
hereditarily $\w_1$-scwH space
has Lindel\"of closure (see \cite{N4} and \cite{N6}, respectively).
So again, $(3)$ in Axiom \ref{tri2} fails outright, and $\s$-$\w$-boundedness
 for $(ii)$ follows from (1)
of Axiom \ref{tri2}.
\medskip

As for $(iii)$, Dow and Tall \cite{DT} have shown that $MM(S)[S]$ implies
that every normal, locally compact space is $\w_1$-cwH.
This implies that every hereditarily normal, locally compact space is hereditarily
$\w_1$-scwH by the following reasoning. First, every open subspace of a locally compact space is locally compact. Therefore, if an open subspace is normal, then by the Dow-Tall theorem, it is  $\w_1$-cwH. Then, by a trivial variation on Theorem \ref{openh}, the space is hereditarily  $\w_1$-cwH. Finally, by the comment following Definition \ref{scwH}, it is hereditarily  $\w_1$-scwH.  Now we can continue
as for $(ii)$.
\medskip

That $X$ is either Lindel\"of or contains a copy of $\w_1$ in case
$(i)$, is immediate from the following fact, whose proof is deferred
to the end of this section:
\smallskip

{\it Every locally compact, monotonically normal space
is either paracompact or contains a closed copy of a regular
uncountable cardinal.}
\medskip

The Lindel\"of alternative for case $(i)$ uses the fact  \cite[5.2.17]{E} that every locally compact,
paracompact space is the union of a disjoint family of
(closed and) open Lindel\"of subspaces. Now $\w_1$-compactness
makes the family countable, and so $X$ is Lindel\"of.
\medskip

\medskip
The same either/or alternative for $(ii)$ and $(iii)$
uses the ZFC theorem that
any $\w$-bounded, locally compact space is either Lindel\"of or  contains a
perfect preimage of $\w_1$,
along with the reduction of character
in Lemma 1.2 of \cite{N4}, which uses $MA(\w_1)$, which in turn is
implied by the PFA. Moreover,
the proof of this lemma carries over to $PFA(S)[S]$. This proof shows
that in a locally compact, hereditarily $\w_1$-scwH space,
every open Lindel\"of subset has Lindel\"of closure and hereditarily
Lindel\"of (hence first countable) boundary.
\smallskip

Now, given a perfect surjective map $\varphi: W \to \w_1$, let $Y$
be the union of the boundaries of the subsets $\varphi^\from [0,
\a)$ where $\a$ is a limit ordinal. Letting $\Lambda $ be the set of
all limit countable ordinals, note that $\psi:Y\to\Lambda$ such that
$\psi^\from(\{\alpha\})=\partial(\varphi^\from[0,\alpha))\subset\varphi^\from(\{\alpha\})$
(here the boundary is computed of course in $W$) is a perfect map
with first-countable fibers, the compactness of $\varphi^\from [0,
\a]$ guaranteeing that
$\partial(\varphi^\from[0,\alpha))\neq\emptyset$ and thus the
surjectivity of $\psi$. Since perfect preimages of locally compact
first-countable spaces under maps with first-countable fibers are
also  first-countable, we can
 apply to $Y$
the theorem \cite{DT1} that $PFA(S)[S]$ implies that every first
countable perfect preimage of $\w_1\cong\Lambda$ contains a copy of
$\w_1$. That the $PFA$ also implies this is a well-known theorem of
Balogh, shown in \cite{D}.
\end{proof}

\begin{rem}\rm In the conclusion of Theorem \ref{Tri3}, one cannot expect the copy of $\w_1$
to be closed. The ordinal $\w_2$ is monotonically normal (as is any
linearly orderable space) and $\w$-bounded, but is not Lindel\"of,
and every copy of $\w_1$ inside it is bounded, hence not closed.
\end{rem}

The following theorem can be derived from Theorem \ref{Tri3} $(ii)$ in the same
way that Theorem \ref{dowker} is derived from Theorem \ref{Tri2}.

\begin{thm} \label{Dowker} Let $X$ be a locally compact, $\w_1$-compact, normal,
hereditarily $\w_1$-scwH
space. If either PFA or PFA(S)[S] holds, then $X$ is countably paracompact.
\end{thm}

To put it
positively, every ZFC example of a locally compact, hereditarily $\w_1$-scwH
Dowker space must contain an uncountable closed discrete subspace.
However, to be absolutely certain of this,
we need a negative answer to the second part of the following question:

\medskip

\noindent
{\bf Problem 4.} {\it Do we need the large cardinal strength of (at least) PID
 to obtain any or all of the conclusions of Theorem \ref{Tri3}
or Theorem \ref{Dowker}?}
\medskip

Where Theorem \ref{Dowker} is concerned, the other applications of $PFA$, etc.
are taken care of by axioms $MA(\w_1)$ and $MA(S)[S]$ \cite{N4}, \cite{N7},
both of which are equiconsistent with ZFC.  However, we may
need something more for the conclusion about the copy of $\w_1$,
or for
the theorem \cite{DT} that $MM(S)[S]$ implies that every locally
compact normal space is $\w_1$-cwH, or for the following corollary of
this fact and of Theorem \ref{Dowker}:

\begin{cor} If $MM(S)[S]$, then every locally compact,
hereditarily normal Dowker space must contain an uncountable closed
discrete subspace. \qquad $\square$ \end{cor}


We now complete the proof of Theorem \ref{Tri3}. First, we  recall a
powerful theorem of Balogh and Rudin \cite{BR}.

\def \E{\cal E}

\begin{thm} \label{br} Let $X$ be a monotonically normal space.
    The following are equivalent.
\medskip

$(1) \ \ X$ is paracompact.
\medskip

$(2) \ \ X$ does not have a closed subspace homeomorphic to a
stationary subset of a regular uncountable cardinal.\end{thm}
\medskip

\begin{lemma} \label{tail} Let $\d$ be an ordinal of uncountable
cofinality, and let $E$ be a locally compact stationary subset of
$\d$. There is a tail (final segment) $T$ of $E$ which is a closed
(hence club) subset of $\d$.\end{lemma}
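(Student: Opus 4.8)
The plan is to play off two facts against each other: a locally compact subspace of a Hausdorff space is open in its closure, and a stationary subset of $\d$ meets every club. I work throughout inside $\d = [0,\d)$ with its order topology, recalling that a subset of $\d$ is compact exactly when it is closed and bounded, and that since $cf(\d) > \w$ the clubs (closed unbounded sets) meet every stationary set.

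The first step is to show that $E$ is open in its closure $\ob E$. Given $x \in E$, local compactness yields a compact $K \subseteq E$ that is a neighborhood of $x$ in $E$, so there is a $\d$-open set $W$ with $x \in W \cap E \subseteq K$. Since $K$ is compact in the Hausdorff space $\d$ it is closed, and any $z \in W \cap \ob E$ has every neighborhood inside $W$ meeting $E$ within $W \cap E \subseteq K$, forcing $z \in \ob K = K \subseteq E$. Thus $W \cap \ob E \subseteq E$, so $x$ is interior to $E$ in $\ob E$; as $x$ was arbitrary, $E$ is open in $\ob E$, and consequently $F := \ob E \sm E$ is closed in $\ob E$, hence closed in $\d$.

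Next I would argue that $F$ is bounded. As $E$ is stationary it is unbounded, so its closed superset $\ob E$ is a club; if $F$ were unbounded it too would be a club, and then the stationary set $E$ would have to meet it, contradicting $F \cap E = \nix$. Hence $F$ is bounded and $\g := \sup F < \d$.

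Finally I would set $T := E \cap (\g, \d)$, a tail of $E$, and check that it is closed in $\d$. Any limit point $\be < \d$ of $T$ is a limit point of $E$, so $\be \in \ob E = E \cup F$; but the elements of $T$ all exceed $\g$, so $\be > \g \ge \sup F$ and therefore $\be \notin F$, forcing $\be \in E$ and hence $\be \in T$. So $T$ contains all its limit points and is closed, and being a tail of the unbounded set $E$ it is unbounded, so it is a club. I expect the only genuinely non-formal step to be the first, converting local compactness into ``open in the closure''; the crucial topological input there is that compact subsets of $\d$ are closed, which is exactly what keeps the closure of the small neighborhood $W \cap E$ trapped inside $K \subseteq E$.
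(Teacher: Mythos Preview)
Your proof is correct, and it takes a genuinely different route from the paper's. The paper argues pointwise: for each $x\in E$ it extracts from local compactness an ordinal $\be_x < x$ so that the $E$-interval $[\be_x,x]$ is compact, and then invokes the Pressing-Down Lemma to freeze $\be_x=\be$ on a stationary (hence cofinal) $S\subseteq E$; the union $\bigcup_{x\in S}[\be,x]$ is the desired tail. Your argument instead packages local compactness into the single global statement ``$E$ is open in $\ob E$,'' deduces that $F=\ob E\sm E$ is closed in $\d$, and then uses only the \emph{definition} of stationarity (stationary sets meet every club) to bound $F$. This buys you a proof that avoids Fodor's lemma entirely and that works verbatim for any locally compact dense-in-itself subset of any linearly ordered space once one has the right notion of ``club'' and ``stationary''; the paper's proof, by contrast, is tied to the regressive-function machinery but is more explicitly constructive about where the tail begins.
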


\begin{proof}
Each $x\in E$ has a compact  open neighborhood $H_x$ of the form
$[\be_x, x]  \cap E$ whose least element is isolated in $E$. Then
the pressing down lemma  gives us a single $\be$ which works for a cofinal subset $S$
of $E$, and so $\bigcup\{[\be, x] \cap E : x \in S\}$ is a tail of
$E$ and a club in $\d$. \end{proof}

Lemma \ref{tail} gives the strengthening of Theorem \ref{br} for
locally compact spaces that was used in the proof of Theorem
\ref{Tri3}:

\begin{cor} \label{rb} Let $X$ be a monotonically normal, locally compact space.
The following are equivalent.

\medskip

        $(i) \ \ X$ is paracompact.
\medskip

$(ii) \ \ X$ does not contain  a closed copy of
a regular uncountable cardinal. \end{cor}

\begin{proof} We show that $(ii)$ is equivalent to (2) of Theorem \ref{br}.  Every ordinal is a
stationary subset of itself; hence if $(2)$ is true then so is $(ii)$. Conversely, every closed 
subset of $X$ is locally compact. Therefore, if $E$ is a closed copy
in $X$ of a  stationary subset of the regular uncountable cardinal
$\d$, then  the $T$ obtained in Lemma \ref{tail} is itself a copy of
$\d$. \end{proof}

\section{\label{mono} Monotonically normal examples under $\clubsuit$ for Question 1.1}

This section features  a remarkably simple construction of consistent examples of locally compact,
 $\w_1$-compact, monotonically
normal spaces of cardinality $\aleph_1$ that
are not $\s$-countably compact.

\bigskip

Our construction gives the ordinal $\w_1$ a locally compact topology $\tau$ that is
finer than the usual (order) topology, in which the $\tau$-relative topology
on the subspace $\La$ of limit ordinals is its usual order topology. As for the successor
ordinals $< \l \in \La$, they will include closed discrete subspaces with supremum $\l,$
and sequences converging to  $\l$. The axiom $\clubsuit$ is used to ensure that
these two behaviors result in both $\w_1$-compactness and failure of $\s$-countable compactness.
\bigskip

The following concept is part of the most widely used statement of $\clubsuit$.

\begin{defn} \rm Given a countable  limit ordinal $\a$, a {\it ladder at $\a$}
is an infinite subset $L_\alpha$ of $\alpha$ such that $|L_\alpha\cap\beta|<\w$ for all
$\beta\in\alpha$.  A {\it ladder system on $\w_1$}
is a family
$$
\cal L = \{L_\a : \a \in \La\} \text{ where each $L_\a$ is a ladder at $\a$}.
$$
\end{defn}


\begin{ax} \label{clubs} Axiom  $\clubsuit$
\rm states that there is a ladder system $\L$ on $\w_1$ such that,
for any uncountable
subset $S$ of $\w_1$, there is $L_\a \in \L$
such that $L_\a \subset S$.
\end{ax}

We now define a general family $\frak S$ of spaces similar to the two
families in \cite{JN}.

\begin{nota} \label{Nota}
 Let $\frak S$
designate the set of topologies $\tau$ on $\w_1$
in which, to each point  $\a$ there are associated
$B(\a) \subset [0, \a]$ and
$B(\a, \xi) = B(\a) \cap (\xi, \a]$ for each $\xi < \a$,
such that:
\smallskip

 \noindent $(1)$ $ \{B(\a, \xi) : \xi < \a\}$ is a base for the neighborhoods of
$\a$ [we allow $\xi = -1$ in case $\a = 0$].

 \smallskip

\noindent $(2)$ If $\a\in \La $  and $\be > \a$, then there exists
$\xi < \a$ such that $B(\a, \xi) =
B(\be) \cap (\xi, \a]$. [In particular, $\a \in B(\be)$.]
\smallskip

\noindent $(3)$ There is a ladder system $\L = \{L_\a : \a \in \La$\},
such that if
$M_\a = \{\xi+1: \xi \in L_\a\}$,
 then $\a = sup [M_\a \cap B(\a)] = sup [M_\a  \setminus B(\a)]$.
\end{nota}

\bigskip

One  role of $M_\a$ is to simplify the construction of spaces in $\frak S$ by
not having to deal with splitting $L_\a$ between limit and isolated ordinals.
It has the novel effect of translating the ladders in $\clubsuit$ to $\w_1 \sm \La$, but
preserving the action of $\clubsuit$ to give $(\w_1, \tau)$ the desired properties.
\bigskip

We do not need $\clubsuit$ for the following lemma:

\begin{lemma} Every space in $\frak S$ is locally compact, and the $\tau$-relative topology
on $\La$ is the usual order topology. \end{lemma}

\begin{proof} The second conclusion is immediate from (2).  Obviously, $B(0) = \{0\}$ and
 $B(\xi + 1, \xi)$ are singletons for all
successor ordinals $\xi+1$.   If $\be \in \La$,
 then, since $B(\be)$ is
countable, it is enough to show that $B(\be)$ is countably compact to show that it is compact.
\smallskip

 \medskip

Let $A$ be an infinite subset of
$B(\be)$. Then $A$ contains a strictly ascending sequence $\sigma$ of ordinals. Let
$sup(ran (\sigma)) = \a$. If $\a = \be$ then $\sigma \to \a$ by (1), while if
$\a < \be$, then (1) and (2) have the same effect, implying that $\a$ is a limit point of
$A$ in $B(\be)$. \end{proof}

Next, we show how to construct spaces in $\frak S$ by induction.

\begin{exs} \rm Let $\tau$ be a topology defined using $\L$,
and using the base
$B(\a)$ and $B(\a, \xi)$ defined by recursion as follows:
\smallskip

Let $B(0) = \{0\}$ and, if $\a = \xi +1$, let $B(\a) = B(\xi) \cup \{\a\}.$
Given $L_\a \in \L$, let $M_\a = \{\xi+1: \xi \in L_\a\}$
be  listed  in strictly ascending order as $\{\a_n : n \in \w\}$.
If  $\a = \nu + \w$ where $\nu$ either is $0$ or a limit
ordinal,  let $B(\a) = B(\nu)
\cup (\nu, \a] \setminus \{\a_{2n} : n \in \w\}$.
\smallskip

\medskip

If $\a \in \La$ is not of the form $\nu + \w$,  let
$S(\a, 0) = B(\a_0)$  and, for $n > 0$, let
$
S(\a, n) = B(\a_n, \a_{n-1})
$
and let
$$
B(\a) = \bigcup_{n=0}^\infty \left( S(\a, n) \cup \{\a_{2n+1} : n \in \w\} \right)
\setminus \{\a_{2n} : n \in \w\}.
$$

\noindent
{\it Claim} $(\w_1, \tau) \in \frak S.$
\smallskip

\noindent
$\vdash$ \quad Assuming (2) in Notation \ref{Nota}, we first show (1).
If $\a \in B(\be, \eta) \cap B(\g, \nu)$ and $\a > \nu,\ \a \notin \La$
 then obviously $B(\a, \a-1) = \{\a\}$ works, i.e.,
 $\alpha\in B(\a, \a-1)\subset B(\be, \eta) \cap B(\g, \nu)$.
Otherwise,  we can find a
basic open neighborhood of $\a$ inside the intersection by using (2). Just
choose $\xi$ greater than $\eta$ and $\nu$ and  large enough so that
$B(\a, \xi) = B(\be) \cap (\xi, \a]$ and also $B(\a, \xi) = B(\g) \cap (\xi, \a]$.
\medskip

Next we show (2)  by induction. Suppose it is false,
and that $\be$ is the first ordinal for which
there is a limit ordinal $\a < \be$
where it fails. Clearly $\be \in \La$. Only successor ordinals are
in $[0, \be] \setminus B(\be)$, so $\a \in B(\be)$.
Let $\a \in S(\be, n) = B(\g, \nu)$.
By minimality of $\be$, there exists $\mu \ge \nu$ such that
$B(\a, \mu) =  B(\g, \nu) \cap (\mu, \a].$
However, $S(\be, n)$ only contains finitely many members of
$L_\be$; once $\xi$ is above all the members of $L_\be$
that are below $\a$, we must have
$B(\be) \cap (\xi, \a] =  B(\g, \nu)  \cap (\xi, \a]$,
and if $\xi \ge \mu$ then this equals $B(\a, \xi)$, a contradiction.
\medskip

Finally, (3) is true by construction since $\a$ is the supremum of the points
of $M_\a$ that have odd subscripts and also of the ones
that have even subscripts. \quad $\dashv$
 \end{exs}
\bigskip

Now we come to the main theorem of this section.

\begin{thm} \label{suit} Suppose that $\tau\in\frak G$ and $\mathcal L$ is a
ladder system such that item $(3)$ of Notation~\ref{Nota}
holds for $\tau$ and $\mathcal L$.
If in addition $ \L$ witnesses $\clubsuit$, then
$(\w_1, \tau)$ is monotonically normal and $\w_1$-compact but not
$\s$-countably compact.\end{thm}

\begin{proof} Recall the Borges criterion  for monotone
normality, Theorem \ref{bor}:
\medskip

{\it There is an assignment of an open neighborhood  $h(x, U) \
=: U_x$ containing $x$ to
each pair $(x, U)$
such that $U$ is an open neighborhood of $x$,
and such that, if $U_x \cap V_y \ne \nix,$
then either $x \in V$ or $y \in U$.}
\medskip

The choice of $h(z, U) = \{z\}$ for all isolated points $z$
works for the case where either
$x$ or $y$ is not a limit ordinal. So it is enough to take care of the
case where  $x$ and $y$ are both
limit ordinals and $x < y$. Given $\a \in U,$ let
$U_\a = B(\a, \xi)$ for some $\xi$ such that $B(\a, \xi) \subset U$.
It follows from (2) that
$x \in U_y$ whenever $U_x \cap U_y \ne \nix.$
\bigskip

It also follows from (2) that the $\tau$-relative topology
on $\La$ is its usual order topology. So, to show $\w_1$-compactness
it is enough to show that every uncountable set $S$ of successor
ordinals has an ascending sequence $\langle s_n : n \in \w \rangle$
that $\tau$-converges to its supremum.
\smallskip

Let $R = \{\xi : \xi +1 \in S\}$.
Since $\L$ witnesses $\clubsuit$, there exists $\a \in R$ such that
$L_\a \subset R$. Then $M_\a \subset S$ and the odd-numbered
elements of $M_\a$ converge to $\a$.

\bigskip

Finally, to show that $(\w_1, \tau)$ is not $\s$-countably compact,
let $\w_1 = \bigcup_{n=1}^\infty A_n$. It is clearly enough to show that any
$A_n$ that contains uncountably many successor ordinals is
not countably compact, since there is at least one such $A_n$.
Let one of these be $S$, and let $R$ and $M_\a$ be as before.
Then the even-numbered members of $M_\a$ are an infinite
$\tau$-closed discrete subset of $\w_1$. \end{proof}
\bigskip

The way that the Borges criterion was shown for the spaces in Theorem \ref{suit}  makes it clear
that they witness the following property.

\begin{defn} \rm A space $X$ is {\it utterly ultranormal} if there is a system of basic neighborhoods
$\cal B_x$ for each $x\in X$ satisfying
\medskip

$(\ddagger)$ \qquad  If $B_x \in \cal B_x$ and $B_y \in \cal B_y$  and $B_x \cap B_y \ne \nix,$
then either $x \in {B_y}$ or $y  \in B_x$. \end{defn}
\medskip

For the more general concept of {\it utterly normal} see \cite{CJN}.
Every utterly normal space is monotonically normal, but the converse
is a  long-standing unsolved problem.


\section{\label{min} The minimum cardinality theme}

We return now to Problems 1 and 2, repeated here for convenience:
\medskip

\noindent
{\bf Problem 1.} {\it What is the minimum cardinality  of
a locally compact, $\w_1$-compact space that is not
$\s$-countably compact? one that is normal?}

\medskip

\noindent
{\bf Problem 2.} {\it Is there a ZFC example of a  locally compact,
$\w_1$-compact space of cardinality $\A_2$ that is not
$\s$-countably compact? one that is normal?}

\medskip

Of course, if $\b = \aleph_1$, then Example \ref{minb} works for Problem 1, and  thus the topological direct sum thereof
with   a compact space of cardinality $\A_2$ works for Problem 2. Similarly, if  $\b = \aleph_2$, then the answer to Problem 2 is Yes, and hence the answer to Problem 1 is $\leq\A_2$.

\bigskip

Theorem~\ref{Tri1} gave us extra set-theoretic conditions under which the answer to Problem 1 is
$\geq \aleph_2$.  Let us note that
the proof of Theorem~\ref{Tri1} for spaces of size $\aleph_1$
 only used the weakening of PID to $P$-ideals on sets
 of size also $\aleph_1$,
 designated $PID_{\aleph_1}$.
Since $PID_{\aleph_2}$ implies $\mathfrak b\leq \aleph_2$ by a result of Todor\v{c}evi\'c,
 it is impossible  to make the same technique  as in the proof of Theorem~\ref{Tri1} work to give us a model where the answer to Problem 1 would be
 $\aleph_3$, as
we would need the inconsistent assumption $PID_{\aleph_2} +
\mathfrak b > \aleph_2$. For the sake of completeness we prove here
the above-mentioned inconsistency. First, some notation and a
definition.

\medskip

Given a relation $R$ on $\omega$ and $x,y\in\omega^\omega$, we
denote the set $\{n\in\omega:x(n)\:R\: y(n)\}$ by $[xRy]$.

\begin{defn} \rm Let $\kappa,\lambda$ be regular cardinals. A {\it
$(\kappa,\lambda)$-pregap} $\langle \{f_\alpha\}_{\alpha<\kappa},
\{g_\beta\}_{\beta<\lambda}\rangle$
 is a pair of
transfinite sequences $\langle f_\alpha:\alpha<\kappa\rangle$ and
$\langle g_\beta:\beta<\lambda\rangle$ of nondecreasing sequences
$f_\alpha, g_\beta$ of natural numbers such that
$f_{\alpha_1}\leq^\ast f_{\alpha_2}\leq^\ast g_{\beta_2}\leq^\ast
g_{\beta_1}$ for all $\alpha_1\leq\alpha_2<\kappa$ and
$\beta_1\leq\beta_2<\lambda$. As usual, $f\leq^\ast g$ means that
the set $[f>g]$  is finite. A $(\kappa,\lambda)$-pregap is called a
{\it $(\kappa,\lambda)$-gap}, if there is no $h\in\omega^\omega$
such that $f_\alpha\leq^\ast h\leq^\ast g_\beta$ for all
$\alpha,\beta$. \end{defn}

 Lemma~\ref{gap1} is a slight improvement of
\cite[Lemma~1.12]{MZ}, which in its
 turn  is modelled on the second paragraph of
\cite{Todorcevic2000}.

\begin{lemma} \label{gap1}
Suppose that there exists a $(\kappa,\lambda)$-gap such that
$\kappa,\lambda$ are regular uncountable.
\begin{enumerate}
\item If $\kappa>\aleph_1$ then
 $\mathrm{PID}_{\lambda}$ fails.
\item If $\lambda>\aleph_1$ then
 $\mathrm{PID}_{\kappa}$ fails.
\end{enumerate}
 \end{lemma}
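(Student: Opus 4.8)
The plan is to prove part (1) in full and then deduce part (2) from it by the standard complementation symmetry of gaps. Passing through the usual identification of $(\omega^\omega,\leq^*)$ with $(\mathcal{P}(\omega),\subseteq^*)$, a $(\kappa,\lambda)$-gap becomes a $\subseteq^*$-increasing tower $\langle A_\alpha:\alpha<\kappa\rangle$ together with a $\subseteq^*$-decreasing tower $\langle B_\beta:\beta<\lambda\rangle$ with $A_\alpha\subseteq^* B_\beta$ for all $\alpha,\beta$ and no interpolating $C$. Complementation $B_\beta\mapsto\omega\setminus B_\beta$, $A_\alpha\mapsto\omega\setminus A_\alpha$ then turns a $(\kappa,\lambda)$-gap into a $(\lambda,\kappa)$-gap and carries interpolants to interpolants. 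Hence applying (1) to the complemented gap, whose increasing side now has length $\lambda>\aleph_1$, yields exactly the conclusion $\mathrm{PID}_\kappa$ of (2). So it suffices to treat (1), and from now on I assume $\kappa>\aleph_1$.

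The strategy for (1) is to manufacture a P-ideal $\mathcal{I}$ on the index set $\lambda$ of the decreasing tower for which \emph{both} alternatives of $\mathrm{PID}_\lambda$ become impossible. The members of $\mathcal{I}$ will be the countable $a\subseteq\lambda$ that are ``gap-small,'' meaning that the traces $\langle B_\beta:\beta\in a\rangle$ cohere with the increasing tower in a suitably uniform way; this is where the $A_\alpha$, and ultimately $\kappa$, enter the definition. That $\mathcal{I}$ is a genuine P-ideal should come essentially for free from the $\subseteq^*$-decreasing character of $\langle B_\beta\rangle$ together with the regularity of $\lambda$: any countable $a\subseteq\lambda$ has $\sup a<\lambda$, so $B_{\sup a}$ is a pseudo-intersection of $\{B_\beta:\beta\in a\}$, and this uniform witness is exactly what verifies that a countable union of gap-small sets is again gap-small, giving the required pseudo-unions for countable subfamilies of $\mathcal{I}$.

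I would then apply $\mathrm{PID}_\lambda$ and rule out alternative (B) using the \emph{no-interpolant} clause in the definition of a gap. A decomposition $\lambda=\bigcup_n Y_n$ into $\mathcal{I}$-orthogonal pieces (each $Y_n$ meeting every member of $\mathcal{I}$ finitely) covers the \emph{entire} index set and is therefore cofinal in $\lambda$; the orthogonality should let me splice the automatic local interpolants $B_{\sup a}$ along this countable cover into a single $C$ with $A_\alpha\subseteq^* C\subseteq^* B_\beta$ for all $\alpha<\kappa$ and all $\beta<\lambda$, contradicting that we began with a real gap. Ruling out alternative (A) is where the hypothesis $\kappa>\aleph_1$ must be spent: an uncountable $A\subseteq\lambda$ all of whose countable subsets lie in $\mathcal{I}$ is ``uniformly gap-small,'' and the coherence built into $\mathcal{I}$ should force the increasing tower to have length $\leq\aleph_1$ — either by producing, for the $\aleph_2$ levels $A_\alpha$, incompatible demands exposed by a pressing-down argument on $A$, or by extracting from $A$ an honest interpolant that the gap forbids.

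The main obstacle is precisely the calibration of $\mathcal{I}$. The gap-small clause has to be weak enough that the pseudo-intersections $B_{\sup a}$ keep $\mathcal{I}$ a non-trivial proper P-ideal and make the splicing in the (B)-step go through, yet strong enough that uniform gap-smallness on an uncountable set collides with a tower of length $\geq\aleph_2$ in the (A)-step. This is exactly the balance struck in \cite[Lemma~1.12]{MZ}, itself modelled on \cite{Todorcevic2000}; the ``slight improvement'' claimed here should consist of isolating the clause so that the (A)-step needs only $\kappa>\aleph_1$ rather than a two-sided hypothesis, after which the complementation symmetry of the first paragraph upgrades the single statement to the stated two-part form.
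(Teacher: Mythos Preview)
Your overall skeleton matches the paper's: define a P-ideal $\mathcal I$ on $\lambda$ in terms of the gap, then show that both PID alternatives fail. Your symmetry reduction of (2) to (1) is also fine and is in fact cleaner than the paper, which simply writes out the mirror-image argument for (2).

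However, the proposal has a real gap at the heart of the construction: you never write down $\mathcal I$, and the mechanism you describe for why $\mathcal I$ is a P-ideal is the wrong one. You say the P-ideal property ``should come essentially for free from the $\subseteq^*$-decreasing character of $\langle B_\beta\rangle$ together with the regularity of $\lambda$,'' using $B_{\sup a}$ as a pseudo-intersection. In the actual argument the P-ideal structure comes from the \emph{other} side of the gap. The paper sets
\[
\mathcal I=\bigl\{A\in[\lambda]^{\aleph_0}:\ \exists\,\alpha<\kappa\ \forall n\ \bigl(\{\beta\in A:[f_\alpha>g_\beta]\subset n\}\text{ is finite}\bigr)\bigr\},
\]
so that membership is witnessed by an ordinal $\alpha<\kappa$, and if $\alpha$ works then so does any $\alpha'>\alpha$. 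Given countably many $A_i\in\mathcal I$ with witnesses $\alpha_i$, one takes $\alpha=\sup_i\alpha_i<\kappa$ (regularity of $\kappa$, not $\lambda$) as a common witness and then trims finite pieces to produce the pseudo-union. No natural notion of ``gap-small'' gives you the P-ideal property from $B_{\sup a}$ alone; note also that $B_{\sup a}$ is one of the $B_\beta$'s, not an interpolant of the gap, so your plan for alternative (B) of ``splicing the local interpolants $B_{\sup a}$'' cannot produce a $C$ with $C\subseteq^* B_\beta$ for \emph{all} $\beta$.

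Once $\mathcal I$ is defined correctly, the two alternatives are dispatched as follows. For (A), an uncountable $S\subseteq\lambda$ with $[S]^{\aleph_0}\subset\mathcal I$ gives $\omega_1$ many witnesses $\alpha_\xi<\kappa$; here is where $\kappa>\aleph_1$ is spent, to get $\alpha=\sup_\xi\alpha_\xi<\kappa$, after which a pigeonhole on the finite sets $[f_\alpha>g_{\beta_\xi}]$ yields a contradiction with the definition of $\mathcal I$. For (B), orthogonality of some cofinal $S_m\subseteq\lambda$ gives, for each $\alpha$, a bound $n_{m,\alpha}$ with $[f_\alpha>g_\beta]\subset n_{m,\alpha}$ for all $\beta\in S_m$; a further pigeonhole on $\alpha$ produces a cofinal $J\subseteq\kappa$ with a common bound $n$, and then $h(k)=\max\{f_\alpha(k):\alpha\in J\}$ is the forbidden interpolant. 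Your outline gestures at both steps but, because it misidentifies where the witnesses live, it does not actually arrive at either contradiction.
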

\begin{proof}

The proofs of the two parts of the lemma are essentially the same,
but we present both of them for the sake of completeness.
\medskip

1. Assume that $\kappa>\aleph_1$ but  $\mathrm{PID}_{\lambda}$
holds. Fix a $(\kappa,\lambda)$-gap
$\langle\{f_\alpha\}_{\alpha<\kappa},\{g_\beta\}_{\beta<\lambda}\rangle$
and set
 $$\mathcal{I}=\{A\in[\lambda]^{\aleph_0} \ :\ \exists\alpha \in\kappa\:\forall n\in\omega\:
(|\{\beta\in A: [f_\alpha>g_\beta]\subset n \}|<\aleph_0)\}. $$ Note
that if $\alpha$ witnesses $A\in\mathcal{I}$, then any
$\alpha'>\alpha$ has also this property because $f_{\alpha}\leq^*
f_{\alpha'}$.
 We claim that $\mathcal{I}$ is a $P$-ideal. Indeed,
let $\{A_i:i\in\omega\}$ be a sequence of   elements of
$\mathcal{I}$, $\alpha_i$ be a witness for $A_i\in\mathcal{I}$, and
$\alpha=\sup\{\alpha_i:i\in\omega\}$. Then $\alpha$ witnesses
$A_i\in\mathcal{I}$ for all $i\in\omega$.  Let $B_i =\{\beta\in A_i:
[f_\alpha>g_{\beta}] \subset i \}$. Then $B_i$ is a finite subset of
$A_i$ by the definition of $\mathcal{I}$. Set
$A=\bigcup_{i\in\omega}A_i\setminus B_i$ and fix $n\in\omega$. If
$\beta\in A_i$ is such that $[f_\alpha>g_\beta]\subset n$ and $i\geq
n$, then $\beta\in B_i$. Therefore
$$\{\beta\in A: [f_\alpha>g_\beta]\subset n \}\subset\{\beta\in \bigcup_{i<n} A_i: [f_\alpha >g_\beta]\subset n\},$$
and the latter set is finite by the definition of $\mathcal{I}$ and
our choice of $\alpha$.
\smallskip

Applying  $\mathrm{PID}_{\lambda}$ to $\mathcal I$ we conclude that
one of the following alternatives is true:
\smallskip

1a. There exists  $S\in [\lambda]^{\aleph_1}$ such that
$[S]^{\aleph_0}\subset\mathcal{I}$. Passing to an uncountable subset
of $S$ if necessary, we can assume that
$S=\{\beta_\xi:\xi<\omega_1\}$ and $\beta_\xi<\beta_\eta$ for any
$\xi<\eta<\omega_1$. For every $\xi$ we denote by $S_\xi$ the set
$\{\beta_\zeta:\zeta<\xi\}$.

By the definition of $\mathcal{I}$ for every $\xi$ there exists
$\alpha_\xi\in\kappa$ witnessing for $S_\xi\in \mathcal{I}$.  Let
$\alpha=\sup\{\alpha_\xi:\xi<\omega_1\}$. There exists $n\in\omega$
such that the set $C= \{\xi<\omega_1: [f_{\alpha}>
g_{\beta_\xi}]\subset n\}$ is uncountable. Let $\xi_0$ be the
$\omega$-th element of $C$. Then $\alpha\geq \alpha_{\xi_0}$ and for
all $\xi\in C\cap\xi_0$ we have $ [f_{\alpha}> g_{\beta_\xi}]\subset
n$. On the other hand, $S_{\xi_0}\in\mathcal{I}$, and hence there
should be only finitely many such $\xi\in\xi_0$, a contradiction.
\smallskip

1b. $\lambda=\bigcup_{m\in\omega}S_m$ such that $S_m$ is orthogonal
to $\mathcal{I}$ for all $m\in\omega$. This  means that for every
$m\in\omega$ and $\alpha\in\kappa$  there exists
$n_{m,\alpha}\in\omega$ such that $[f_{\alpha}>g_\beta]\subset
n_{m,\alpha}$ for all $\beta\in S_m$. (If there is no such
$n=n_{m,\alpha}$, construct a sequence
$\langle\beta_n:n\in\omega\rangle\in S_m^\omega$ such that
$[f_{\alpha}>g_{\beta_n}]\not\subset n$ and note that $\alpha$
witnesses $\{\beta_n:n\in\omega\}\in\mathcal{I}$, which is
impossible because $S_m$ is orthogonal to $\mathcal{I}$). Since
$\lambda $ is regular uncountable,  there exists $m\in\omega$ such
that $S_m$ is cofinal in $\lambda$. Let $n\in\omega$ be such that
the set $J=\{\alpha\in\kappa:n_{m,\alpha}=n\}$ is cofinal in
$\kappa$. For every $k$ let $h(k)=\max\{f_{\alpha}(k):\alpha\in
J\}$. Note that $h(k)\in\w$  for $k\geq n_{m,k}$
because $f_{\alpha}(k)\leq g_{\beta}(k) $ for all $\alpha\in J$,
$\beta\in S_m$, and $k\geq n_{m,\alpha}$. For $k<n_{m,\alpha}$ the
fact that $h(k)\in\w$ follows from  $f_\alpha$'s being
non-decreasing.
 From the above it follows that $[g_\beta < h]\subset n$ for
all $\beta\in S_m$, and hence $h$ contradicts the fact that
 $\langle\{f_\alpha\}_{\alpha\in J},\{g_\beta\}_{\beta\in S_m}\rangle$
is a gap.
\medskip

2. Assume that $\lambda>\aleph_1$ but $\mathrm{PID}_{\kappa}$ holds.
Fix a $(\kappa,\lambda)$-gap
$\langle\{f_\alpha\}_{\alpha<\kappa},\{g_\beta\}_{\beta<\lambda}\rangle$
and set
 $$\mathcal{I}=\{A\in[\kappa]^{\aleph_0} \ :\ \exists\beta \in\lambda\:\forall n\in\omega\:
(|\{\alpha\in A: [f_\alpha>g_\beta]\subset n \}|<\aleph_0)\}. $$
Note that if $\beta$ witnesses $A\in\mathcal{I}$, then any
$\beta'>\beta$ has also this property because $f_{\beta'}\leq^*
f_{\beta}$.
 We claim that $\mathcal{I}$ is a $P$-ideal. Indeed,
let $\{A_i:i\in\omega\}$ be a sequence of   elements of
$\mathcal{I}$, $\beta_i$ be a witness for $A_i\in\mathcal{I}$, and
$\beta=\sup\{\beta_i:i\in\omega\}$. Then $\beta$ witnesses
$A_i\in\mathcal{I}$ for all $i\in\omega$.  Let $B_i =\{\alpha\in
A_i: [f_\alpha>g_{\beta}] \subset i \}$. Then $B_i$ is a finite
subset of $A_i$ by the definition of $\mathcal{I}$. Set
$A=\bigcup_{i\in\omega}A_i\setminus B_i$ and fix $n\in\omega$. If
$\alpha\in A_i$ is such that $[f_\alpha>g_\beta]\subset n$ and
$i\geq n$, then $\alpha\in B_i$. Therefore
$$\{\alpha\in A: [f_\alpha>g_\beta]\subset n \}\subset\{\alpha\in \bigcup_{i<n}
A_i: [f_\alpha >g_\beta]\subset n\},$$ and the latter set is finite
by the definition of $\mathcal{I}$ and our choice of $\beta$.

Applying  $\mathrm{PID}_{\kappa}$ to $\mathcal I$ we conclude that
one of the following alternatives is true:
\smallskip

2a. There exists  $S\in [\kappa]^{\aleph_1}$ such that
$[S]^{\aleph_0}\subset\mathcal{I}$. Passing to an uncountable subset
of $S$ if necessary, we can assume that
$S=\{\alpha_\xi:\xi<\omega_1\}$ and $\alpha_\xi<\alpha_\eta$ for any
$\xi<\eta<\omega_1$. For every $\xi$ we denote by $S_\xi$ the set
$\{\alpha_\zeta:\zeta<\xi\}$.

By the definition of $\mathcal{I}$ for every $\xi$ there exists
$\beta_\xi\in\lambda$ witnessing for $S_\xi\in \mathcal{I}$.  Let
$\beta=\sup\{\beta_\xi:\xi<\omega_1\}$ and note that it witnesses
$S_\xi\in\mathcal{I}$ for all $\xi<\omega_1$. On the other hand,
there exists $n\in\omega$ such that the set $C= \{\xi<\omega_1:
[f_{\alpha_\xi}> g_{\beta}]\subset n\}$ is uncountable. Let $\xi_0$
be the $\omega$-th element of $C$. Then $\beta$ fails to witness
$S_{\xi_0}\in\mathcal{I}$, a contradiction.
\smallskip

2b. $\kappa=\bigcup_{m\in\omega}S_m$ such that $S_m$ is orthogonal
to $\mathcal{I}$ for all $m\in\omega$. This  means that for every
$m\in\omega$ and $\beta\in\lambda$  there exists
$n_{m,\beta}\in\omega$ such that $[f_{\alpha}>g_\beta]\subset
n_{m,\beta}$ for all $\alpha\in S_m$. (If there is no such
$n=n_{m,\beta}$, construct a sequence
$\langle\alpha_n:n\in\omega\rangle\in S_m^\omega$ such that
$[f_{\alpha_n}>g_{\beta}]\not\subset n$ and note that $\beta$
witnesses $\{\alpha_n:n\in\omega\}\in\mathcal{I}$, which is
impossible because $S_m$ is orthogonal to $\mathcal{I}$). Since
$\kappa$ is regular uncountable, there exists $m\in\omega$ such that
$S_m$ is cofinal in $\kappa$. Let $n\in\omega$ be such that the set
$J=\{\beta\in\lambda:n_{m,\beta}=n\}$ is cofinal in $\lambda$. For
every $k$ let $h(k)=\min\{g_{\beta}(k):\beta\in J\}$. From the above
it follows that $[f_\alpha > h]\subset n$ for all $\alpha\in S_m$,
and hence $h$ contradicts the fact that
 $\langle\{f_\alpha\}_{\alpha \in S_m},\{g_\beta\}_{\beta \in J}\rangle$
is a gap.

\end{proof}

This proof also shows that the following weakening of  $\mathrm{PID}_\lambda$ fails in (1): 

\begin{ax} $\mathrm{PID}^w_\lambda$ \rm states that for each P-ideal $\cal I$ on a set $X$ of cardinality $|X|\geq\lambda$, either there is an uncountable set $A\subset X$ with $[A]^\w \subset \cal I$  or a set $B$ of cardinality $\lambda$ such that $B \cap I$ is finite
 for all $I \in \cal I$.\end{ax}

Similarly, $\mathrm{PID}^w_\kappa$ fails in (2).
\bigskip

The proof of the next lemma  can be found in \cite[page 578]{J}.

\begin{lemma} \label{gap2}
If $\mathfrak{b} > \aleph_2$ then there is an $(\aleph_2,
\lambda)$-gap for some uncountable regular $\lambda$.
\end{lemma}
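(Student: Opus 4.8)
The plan is to take $\lambda=\mathfrak b$ and build an $(\aleph_2,\mathfrak b)$-gap directly. This is a legitimate choice of $\lambda$: it is a ZFC theorem that $\mathfrak b$ is regular, and the hypothesis $\mathfrak b>\aleph_2$ forces $\mathfrak b>\aleph_0$, so $\lambda=\mathfrak b$ is uncountable and regular. The hypothesis will be used in two complementary ways, both straight from the definition of $\mathfrak b$: every $\le^*$-increasing sequence of nondecreasing functions of length $\le\aleph_2$ (indeed of any length $<\mathfrak b$) is $\le^*$-bounded, whereas there is a $\le^*$-increasing sequence $\langle u_\beta:\beta<\mathfrak b\rangle$ of length $\mathfrak b$ that is $\le^*$-unbounded.

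First I would fix the lower half. Choose a $\le^*$-increasing sequence $\langle f_\alpha:\alpha<\aleph_2\rangle$ of nondecreasing functions; since $\aleph_2<\mathfrak b$ it has a nondecreasing $\le^*$-bound $f^*$, which I arrange so that $f_\alpha<^* f^*$ for all $\alpha$. The upper half $\langle g_\beta:\beta<\mathfrak b\rangle$ would then be constructed by recursion of length $\mathfrak b$, kept $\le^*$-decreasing with $f_\alpha<^* g_\beta<^* f^*$ at every stage, so that the two sequences always form a pregap. At each step I would choose $g_\beta$ strictly $\le^*$-below the previous $g_\gamma$'s but still $\ge^*$ every $f_\alpha$; a limit stage $\beta$ of cofinality $<\mathfrak b$ is handled by regularity of $\mathfrak b$ together with boundedness of $<\mathfrak b$-sized families, which supplies a function lying below all earlier $g_\gamma$ and above all $f_\alpha$. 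The essential extra ingredient is to let the unbounded family $\langle u_\beta\rangle$ steer the descent of the $g_\beta$, via the order-reversing duality between $\le^*$-increasing unbounded sequences and $\le^*$-decreasing sequences that admit no lower bound (informally, by passing to generalized inverse functions); this is what guarantees that the recursion genuinely lasts $\mathfrak b$ steps rather than stalling early.

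The payoff is the no-interpolant argument. An interpolant $h$ would satisfy $f_\alpha\le^* h\le^* g_\beta$ for all $\alpha<\aleph_2$ and $\beta<\mathfrak b$; in particular $h$ would be a common $\le^*$-lower bound of the whole sequence $\langle g_\beta\rangle$ that still dominates every $f_\alpha$. By the way the descent was driven, such an $h$ yields a single function lying $\le^*$-above all of $\langle u_\beta:\beta<\mathfrak b\rangle$, contradicting the $\le^*$-unboundedness of that length-$\mathfrak b$ sequence, i.e.\ the definition of $\mathfrak b$. Hence the pregap is a gap. I expect the main obstacle to be precisely the interleaving in the middle step: because $\aleph_2<\mathfrak b$ there is a priori ``room'' strictly between the $f_\alpha$'s and $f^*$, and an interpolant could hide there, so the $f$'s and $g$'s must be threaded together tightly in the manner of an (asymmetric) Hausdorff gap rather than built independently. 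This delicate bookkeeping --- ensuring that no slack is left below the $g_\beta$'s while simultaneously respecting the length-$\mathfrak b$ unboundedness --- is the technical heart of the argument, and is the part carried out in full in \cite[page~578]{J}.
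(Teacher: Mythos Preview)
Your plan commits to $\lambda=\mathfrak b$ at the outset, and the justification of the limit step in the recursion is where the argument breaks. You write that at a limit stage $\beta$ of cofinality $<\mathfrak b$, ``regularity of $\mathfrak b$ together with boundedness of $<\mathfrak b$-sized families \dots\ supplies a function lying below all earlier $g_\gamma$ and above all $f_\alpha$.'' But this is not a boundedness problem: you need a single $h$ satisfying $f_\alpha\le^* h\le^* g_\gamma$ for all $\alpha<\aleph_2$ and all $\gamma<\beta$, which is exactly the assertion that the $(\aleph_2,\mathrm{cf}(\beta))$-pregap built so far is \emph{not} a gap. Nothing in the definition of $\mathfrak b$ gives you that; the constraints point in opposite directions and cannot be amalgamated by a dominating function. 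So the recursion may well stall at some $\beta$ with $\mathrm{cf}(\beta)=\mu<\mathfrak b$, producing an $(\aleph_2,\mu)$-gap rather than an $(\aleph_2,\mathfrak b)$-gap. The ``steering by $u_\beta$'' via generalized inverses does not rescue this: the inverses of an unbounded family have only eventually-bounded functions as $\le^*$-lower bounds, so once you insist that every $g_\beta$ stay above the unbounded $f_\alpha$'s you have destroyed the duality you want to invoke.

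The argument the paper cites (Jech, page~578) does not fix $\lambda$ in advance. One takes a $\le^*$-increasing $\langle f_\alpha:\alpha<\aleph_2\rangle$, uses $\aleph_2<\mathfrak b$ to see it is bounded, and then recursively builds a \emph{maximal} strictly $\le^*$-decreasing sequence $\langle g_\beta:\beta<\lambda\rangle$ of upper bounds. Maximality is precisely what makes it a gap, and $\lambda$ is whatever ordinal the recursion reaches; one then checks separately that $\lambda$ is (or may be taken to be) a regular cardinal and that $\lambda>\omega$ --- the latter because any $(\aleph_2,\omega)$-pregap with $\aleph_2<\mathfrak b$ can be filled by a diagonal argument. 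Your outline is salvageable if you drop the claim $\lambda=\mathfrak b$ and instead read the first stall of the recursion as the desired gap; but then the unbounded family $\langle u_\beta\rangle$ and the inverse-function heuristic play no role.
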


As a corollary we get the following strengthening of a  result usually attributed to
Todor\v{c}evi\'{c} for $\mathrm{PID}_{\aleph_2}$  .

\begin{thm} \label{pid_b}
$(\mathrm{PID}_{\aleph_2}^w\wedge \mathrm{PID}_{\aleph_1}^w)$ implies $\mathfrak{b}\leq \aleph_2$.
\end{thm}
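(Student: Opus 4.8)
The plan is to derive a contradiction from the conjunction of $\mathrm{PID}_{\aleph_2}$ and $\mathfrak{b} > \aleph_2$, using the two lemmas just established. First I would suppose toward a contradiction that $\mathfrak{b} > \aleph_2$ holds. The strategy is to invoke Lemma \ref{gap2}, which under exactly this hypothesis furnishes an $(\aleph_2, \lambda)$-gap for some uncountable regular cardinal $\lambda$. With such a gap in hand, the goal reduces to applying Lemma \ref{gap1} to it; the whole argument is essentially a matching of hypotheses between the two lemmas.

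The key observation is which part of Lemma \ref{gap1} applies. Here $\kappa = \aleph_2$ and $\lambda$ is the uncountable regular cardinal produced by Lemma \ref{gap2}. Since $\kappa = \aleph_2 > \aleph_1$, part $(1)$ of Lemma \ref{gap1} is applicable and yields that $\mathrm{PID}_{\lambda}$ fails. But I must reconcile this with the assumed $\mathrm{PID}_{\aleph_2}$, so the delicate point is the relationship between $\lambda$ and $\aleph_2$. The natural move is to note that $\mathrm{PID}$ for larger index cardinals should subsume the instance for smaller ones, or else to argue directly that the $\mathrm{PID}_{\aleph_2}$ instance already contradicts the failure extracted from the gap. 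In fact, since Lemma \ref{gap2} only asserts $\lambda$ is uncountable regular (with no control beyond $\lambda \le \mathfrak{c}$), the cleanest route is to read the proof of Lemma \ref{gap1}(1): the ideal $\mathcal{I}$ it constructs lives on $[\lambda]^{\aleph_0}$, but the contradiction is ultimately produced by an uncountable (size-$\aleph_1$) witnessing set in alternative 1a and by a cofinal-in-$\lambda$ decomposition in alternative 1b. I would check that the alternative actually used is compatible with applying $\mathrm{PID}$ at the index $\aleph_2$ rather than $\lambda$ itself — most likely the argument localizes to the $\aleph_1$-sized or $\aleph_2$-sized substructure of the gap.

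The main obstacle I anticipate is precisely this index-matching: Lemma \ref{gap1}(1) concludes that $\mathrm{PID}_\lambda$ fails, whereas the theorem hypothesizes $\mathrm{PID}_{\aleph_2}$, and a priori $\lambda$ could exceed $\aleph_2$. The resolution I expect is that the gap from Lemma \ref{gap2} can be taken with $\lambda = \aleph_2$ as well, or that one appeals to Lemma \ref{gap1}(2) with the roles reversed: if the gap is an $(\aleph_2, \lambda)$-gap and we instead want to contradict $\mathrm{PID}_{\aleph_2}$, we set $\kappa = \aleph_2$ in part $(2)$, which requires $\lambda > \aleph_1$ (true) and concludes $\mathrm{PID}_{\aleph_2} = \mathrm{PID}_\kappa$ fails. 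This is the correct application: part $(2)$ says ``if $\lambda > \aleph_1$ then $\mathrm{PID}_\kappa$ fails,'' and with $\kappa = \aleph_2$ and $\lambda$ uncountable regular this directly contradicts $\mathrm{PID}_{\aleph_2}$.

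So the proof assembles as follows. Assume $\mathfrak{b} > \aleph_2$. By Lemma \ref{gap2} there is an $(\aleph_2, \lambda)$-gap with $\lambda$ uncountable regular; in particular $\lambda > \aleph_1$. Apply part $(2)$ of Lemma \ref{gap1} with $\kappa = \aleph_2$: since $\lambda > \aleph_1$, we conclude that $\mathrm{PID}_{\aleph_2}$ fails. Contrapositively, $\mathrm{PID}_{\aleph_2}$ implies $\mathfrak{b} \le \aleph_2$, which is the desired conclusion. The entire argument is a two-line deduction once the correct branch of Lemma \ref{gap1} is identified, so I would keep the written proof very short and simply cite the two lemmas with the parameters instantiated.
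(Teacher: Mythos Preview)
Your overall strategy matches the paper's: assume $\mathfrak{b} > \aleph_2$, invoke Lemma~\ref{gap2} to get an $(\aleph_2,\lambda)$-gap, and feed it into Lemma~\ref{gap1}. However, your final assembly contains a genuine slip. You write ``$\lambda$ uncountable regular; in particular $\lambda > \aleph_1$,'' but this is false: $\aleph_1$ is itself an uncountable regular cardinal, and Lemma~\ref{gap2} gives no lower bound on $\lambda$ beyond $\aleph_1$. So part~(2) of Lemma~\ref{gap1}, whose hypothesis is $\lambda > \aleph_1$, does not apply when $\lambda = \aleph_1$, and your argument leaves that case uncovered.

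The paper's proof handles this by a case split. If $\lambda > \aleph_1$, apply Lemma~\ref{gap1}(2) exactly as you do to conclude $\mathrm{PID}_{\aleph_2}$ fails. If $\lambda = \aleph_1$, apply Lemma~\ref{gap1}(1) instead (its hypothesis $\kappa = \aleph_2 > \aleph_1$ is satisfied) to conclude $\mathrm{PID}_{\aleph_1}$ fails; since $\mathrm{PID}_{\aleph_2}$ trivially implies $\mathrm{PID}_{\aleph_1}$, this is still a contradiction. Ironically, you discussed part~(1) and worried about $\lambda$ being too \emph{large}, but the actual missing case is $\lambda$ being as small as possible. Once you add this one-line case split, your proof is complete and identical to the paper's.
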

\begin{proof}
Suppose that $\mathfrak{b}>\aleph_2$ and nevertheless
$\mathrm{PID}_{\aleph_2}^w$ holds. By Lemma~\ref{gap2} there is an
$(\aleph_2, \lambda)$-gap for some uncountable regular $\lambda$. If
$\lambda=\aleph_1$, then $\mathrm{PID}_{\aleph_1}^w$ fails by
Lemma~\ref{gap1}(1). If $\lambda>\aleph_1$, then
$\mathrm{PID}_{\aleph_2}^w$ fails by Lemma~\ref{gap1}(2).
\end{proof}

\bigskip

Clearly, new ideas for the solution of Problem~2  are needed!
\bigskip

At the opposite extreme from Problem 2, we have:
\medskip

\noindent
{\bf Problem 5.} {\it Can $\b$ be ``arbitrarily large'' and still
be  the minimum cardinality of
a locally compact, $\w_1$-compact space that is not
$\s$-countably compact? of one that is also normal?}
\medskip

The question of what happens under Martin's Axiom $(MA)$ is
especially interesting since it implies $\b = \frak c$. It also
implies that $\clubsuit$ fails and that  there are no Souslin trees.
Now a Souslin tree with the interval topology is of cardinality
$\A_1$, and is locally compact, locally countable, $\w_1$-compact
and hereditarily collectionwise normal \cite[4.18]{N2} (hence
hereditarily normal and hereditarily strongly cwH) but is not
$\s$-countably compact.
\medskip

 To show
$\w_1$-compactness, use the fact that every closed discrete subspace in a tree
with the interval topology is a countable union of antichains, and the fact that
a Souslin tree is, by definition, an uncountable tree in which every chain
and antichain is countable. To show that a Souslin tree is not
the union of countably many countably compact subspaces, use these facts
together with the fact that at least one of these subspaces would have to be uncountable,
and the observation that the Erd\H os - Rad\'o theorem implies that
every uncountable tree must either contain an
uncountable chain, or an  infinite antichain, which would be a closed discrete subspace,
contradicting countable compactness. [The interval topology on a subtree is not
always the relative topology, but the relative topology is finer, so
new infinite closed discrete subspaces could arise.]
\medskip

It is worth noting here that adding a single Cohen real
adds a Souslin tree but leaves $\b$
as the same aleph that it is in the ground model. So $\b$ can
be arbitrarily large while the minimum cardinality for Problem 1
is $\A_1$.
\bigskip

\section{A maximum cardinality theme}

Since most of the examples mentioned are locally countable, it is
natural to inquire what happens if local countability is added
to local compactness and $\w_1$-compactness.
The following very simple problem is a dramatic counterpoint to
Question 1.1.
\medskip

\noindent
{\bf Problem 6.} Is there a ZFC example of a  normal, locally countable,
countably compact space of cardinality greater than $\A_1$?
\medskip

Although this problem does not mention local compactness, that
is easily implied by normality (indeed regularity), local
countability, and countable compactness.
\bigskip

In a paper in preparation, \cite{Ns}, the first author shows
 that there does exist a consistent example of a locally countable,
normal, $\w$-bounded (hence countably compact) space
of cardinality $\A_2$, under the axiom $\square_{\A_1}$.
This axiom is consistent if ZFC is consistent, and the
equiconsistency strength of its negation is that of a Mahlo cardinal
\cite[Exercise 27.2]{J}. But the following question, a counterpoint
to Problem 2, is completely open --- no consistency results are
known either way:
\medskip

\noindent
{\bf Problem 7.} Is there a normal, locally countable,
countably compact space of cardinality greater than $\A_2$?
\medskip

Of course, one could also ask whether there is an upper bound
on the cardinalities of normal, locally countable,
countably compact spaces. If ``countably compact'' is strengthened
to ``$\w$-bounded'' then it is consistent, modulo large cardinals,
that there is an upper bound. In fact, it has long been known that every regular,
locally countable, $\w$-bounded space is of cardinality $< \A_\w$ if
the Chang Conjecture variant $(\A_{\w+1}, \A_\w) \to (\A_1, \A_0)$
holds \cite{JSS}. It has recently been shown by Monroe Eskew and Yair Hayut
\cite{EH} that the consistency of a huge cardinal implies
that this variant is consistent. Previously, the best large cardinal axiom known was the existence of a 2-huge cardinal.
\medskip

On the other hand, it has long been known that there are examples of regular,
locally countable, $\w$-bounded spaces of cardinality $\A_n$ for all finite $n$
 just from ZFC, as
was shown in  \cite{JNW}. However, the construction in \cite{JNW} is so general
that it could even produce non-normal
closed subspaces of cardinality $\A_1$, and it is difficult to see how these
can be avoided in ZFC without many additional details, if at all.
\bigskip

We conjecture that it is consistent, modulo large cardinals,
that every normal, locally  countable, countably compact space is
of cardinality $< \A_\w$, leaving only the consistency of examples
of cardinality $\A_n$ unknown for $3 \le n < \w$ . This is because
of the  Chang Conjecture variant result \cite{JSS} for the $\w$-bounded case and the following
lemma:

\begin{lemma} \label{conj} The PFA implies every normal, first countable,
countably compact space is $\w$-bounded. \end{lemma}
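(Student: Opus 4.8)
The plan is to reduce the statement to a single countable set and then invoke the structure theory of separable, first countable, countably compact spaces under $PFA$. So let $X$ be normal, locally countable and countably compact, let $A \in [X]^{\le \w}$, and put $Y = \ob{A}$; it suffices to show each such $Y$ is compact, for then every countable subset of $X$ has compact closure and $X$ is $\w$-bounded. Being a closed subspace of $X$, $Y$ is again normal and countably compact, and it is locally countable because local countability is inherited by arbitrary subspaces; moreover $A$ is dense in $Y$, so $Y$ is separable.

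The first real step is to upgrade these properties to local compactness and first countability, as foreshadowed in the remark after Problem 7. Given $y \in Y$ with a countable open neighborhood $U$, regularity yields an open $V$ with $y \in V \subseteq \ob{V} \subseteq U$; then $\ob{V}$ is a closed, hence countably compact, subset of $Y$ which is countable, and a countable countably compact regular space is compact (countable $\Rightarrow$ Lindel\"of, and Lindel\"of plus countably compact is compact). Thus every point of $Y$ has a compact countable neighborhood. A compact Hausdorff space with a countable network is metrizable, so these neighborhoods are metrizable; hence $Y$ is locally compact and first countable. Collecting countable open neighborhoods of the points of the dense set $A$ also exhibits a countable \emph{open} dense $W \subseteq Y$ with $Y = \ob{W}$, so $Y$ is the closure of a countable open set.

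It now suffices to prove that $Y$ is countable, equivalently that the boundary $B = Y \sm W$ is countable, since a countable countably compact regular space is compact as above. Suppose toward a contradiction that $|B| \ge \A_1$. As $Y$ is first countable and countably compact (hence sequentially compact), each $b$ in an uncountable $B' \subseteq B$ is the limit of an injective sequence $\s_b$ ranging in $W$; identifying $W$ with $\w$, the Hausdorff property forces the ranges $\{ran(\s_b) : b \in B'\}$ to form an almost disjoint family of size $\A_1$ on $\w$, while countable compactness says that every infinite subset of $\w$ accumulates somewhere in $Y$. This is precisely the configuration of a thin, sequentially compact remainder over a countable dense set, which is coded by an $\w_1$-tower (equivalently a gap) on $\w$ of the kind studied in Lemmas \ref{gap1} and \ref{gap2}. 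Under $PFA$ one has $\b = \mathfrak{t} = \A_2 > \A_1$, so every such $\w_1$-indexed structure admits a pseudo-intersection and cannot be maximal; the resulting infinite subset of $\w$ contradicts countable compactness, whence $B$ is countable and $Y$ is compact.

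The main obstacle is exactly this last step. The reduction to a separable, first countable, locally compact, countably compact space is elementary ZFC bookkeeping, but passing from ``not compact'' to a genuine contradiction is where the strength of $PFA$ enters: one must rule out \emph{every} separable, first countable, countably compact, non-compact space, and the witnessing object is an $\w_1$-tower/gap on $\w$ (an Ostaszewski / Franklin--Rad\'o--Rajagopalan type remainder) rather than a closed discrete set. Note that countable compactness of $B$ forbids infinite closed discrete subsets, so Jones'-lemma and $\w_1$-compactness arguments are unavailable, and it is the combinatorial gap analysis of this section that must do the work. A clean alternative, if one prefers a black box, is to cite the known consequence of $PFA$ that every separable, first countable, countably compact regular space is compact, and apply it directly to $Y$.
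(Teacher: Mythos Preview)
Your reduction is exactly right and matches the paper: pass to $Y=\overline{A}$ for a countable $A$, and observe that $Y$ is separable, normal, countably compact, and (via the locally-countable $\Rightarrow$ compact-countable-neighborhood argument) locally compact and first countable. The paper's proof begins the same way.

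The gap is in your combinatorial paragraph. From the sequences $\sigma_b$ you extract an almost disjoint family of size $\aleph_1$ on $W\cong\omega$, and then assert that under $\mathfrak t>\aleph_1$ this ``admits a pseudo-intersection,'' contradicting countable compactness. But an almost disjoint family never has an infinite pseudo-intersection --- any infinite set almost contained in two almost disjoint sets would force their intersection to be infinite. An AD family is also not a tower, nor is it the kind of $(\kappa,\lambda)$-gap treated in Lemmas~\ref{gap1} and~\ref{gap2}, which concern $\leq^*$-ordered sequences in $\omega^\omega$; the reference to those lemmas is misplaced. So this step does not produce a contradiction, and the argument that $B$ is countable does not go through as written.

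Your ``clean alternative'' is the correct route, and it is precisely what the paper does, only with the black box unpacked. The paper cites three specific PFA consequences: (1) Balogh's dichotomy that every first countable, countably compact space is either compact or contains a copy of $\omega_1$; (2) $\mathfrak p=\aleph_2$; and (3) the result from \cite{N1} that if $\mathfrak p>\aleph_1$ then no separable, \emph{normal}, first countable, countably compact space contains a perfect preimage of $\omega_1$. Since a copy of $\omega_1$ is in particular a perfect preimage, (3) rules out the second horn of (1) for $Y$, and $Y$ is compact. Note that normality of $Y$ is used essentially in (3); your black-box formulation with only ``regular'' is stronger than what is actually cited, though harmless here since $Y$ is normal.
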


\begin{proof} This follows from the following  three statements,
the first two being consequences of the PFA:
\smallskip

(1) every first countable,
countably compact space is either compact or
contains a copy of $\w_1$ \cite[6.5, 6.6]{D},
\smallskip

(2)  $\mathfrak p = \mathfrak c = \A_2$, and:
\smallskip

(3)  \cite[3.9]{H} $\mathfrak p = \mathfrak c$ is equivalent to the following statement.
\medskip

\noindent
 {\bf H}: {\it If $D$ is a countable dense subset of a $T_1$ space $K$, and there exists an open cover $\mathcal U$ of $K$ such that
$\{U \cap D: U \in \mathcal U \}$ has cardinality $< \mathfrak c$, and admits no finite subcover of $D$, then $D$ has an
infinite closed discrete subset.}

\medskip
 Let $X$ be a normal, first countable, countably compact space with a countable dense subset $D_0$.  If $X$ is not compact,
then by (1), identify $\w_1$ with a subspace of $X$. By first countability, $\w_1$ is closed in $X$.
For each $\a \in \w_1$,
let $V_\a$ be an open neighborhood of $[0, \a]$ in $X$ whose closure misses $\w_1 \sm [0, \a]$. This does not require normality,
only the elementary fact that in a Hausdorff space, a compact set and a closed set disjoint from it can be put into disjoint open sets.
\medskip

Let $\mathcal V = \{V_\a : \a \in \w_1\}$ and let $V = \bigcup \mathcal V$. Assuming normality, let $G$ be an open neighborhood of
$\w_1$ whose closure $K$ is a subset of $V$ and let $D = D_0 \cap G.$ Then $D$, $K$, and $\mathcal U = \{V_\a \cap K: \a \in \w_1\}$
are as in statement {\bf H}, so in $K$ there is an infinite closed discrete subset of $D$. But $K$ is a closed subset of $X$, contradicting countable compactness of $X$.
\end{proof}

\begin{rem}  In the light of Lemma \ref{conj}, the questions in this section have natural generalizations
for first countable, locally compact spaces. The key is that every point in such a space either has a
a countable neighborhood, or a compact neighborhood of cardinality $\mathfrak c$. \end{rem}

For instance, our conjecture can be rephrased:

\begin{Conj} It is consistent, modulo large cardinals, that every normal, locally compact, first countable,
countably compact space is of Lindel\"of degree $< \A_\w$ and hence
of cardinality $< max\{\A_\w, \mathfrak c^+\}$.\end{Conj}

If ``normal" is replaced by ``locally hereditarily Lindel\"of," the answer is Yes: the Chang Conjecture variant does it \cite{JSS}.

\section{Locally compact, quasi-perfect preimages}

Example \ref{minb} was
the first nontrivial example for the following problem
by van Douwen \cite{vD2}.
\medskip

\noindent
{\bf Problem 8.} {\it Is ZFC enough to imply that each
 first countable
regular space of cardinality at
most $\frak c$ is a quasi-perfect image of some locally compact space? }

\begin{defn} \rm A continuous map $: X \to Y$ is
{\it quasi-perfect} if it is surjective, and closed, and
each fiber $f^\from \{x\}$ is countably compact.\end{defn}

The following theorem was the key to Example \ref{minb}.

\begin{thm} \label{E} \cite{N7} Let $E$ be a stationary, co-stationary
subset of $\w_1$. There is a locally compact, normal, quasi-perfect preimage
of $E$, of cardinality $\frak b$. If $\frak b = \A_1$, then this preimage can
also be locally countable, hence first countable.\end{thm}

The stated  properties of Example \ref{minb} follow from this theorem and from the
following simple facts.

\begin{thm} \label{maps} Let $f : Y \to X$ be a continuous surjective map.
\medskip

\noindent
$(i)$ If $X$ is not $\s$-countably compact, neither is $Y$.
\medskip

\noindent
$(ii)$ If $X$ is $\w_1$-compact, and $f$ is closed, and each
fiber $f^\from \{x\}$ is $\w_1$-compact, then $Y$ is $\w_1$-compact.

\end{thm}

\begin{proof} Statement $(i)$ easily follows by contrapositive and the
elementary fact that the continuous image of a countably compact
space is countably compact.
\medskip

To show $(ii)$, let $A$ be
an uncountable subset of $Y$. Then either $A$ meets some fiber in
an uncountable subset, in which case it is not closed discrete
in $Y$, or $f^\to[A]$ is uncountable and so it is not closed discrete.
Let $B$ be a subset of  $f^\to[A]$  that is not closed,
 and let $p \in \ob{B}\setminus B$.
\smallskip

Let $A_0 = \{x_b : b \in B\}$ be a subset of $A$ such that
$f(x_b) = b$ for all $b \in B$. Because $f$ is closed,
and $B$ is not closed, neither is $A_0$ and, in fact,
it has an accumulation point in  $f^\from \{p\}$.
\end{proof}

\medskip
\def \P{\Bbb P}

If $E$ is co-stationary, then all countably compact subsets of
$E$, being closed, are countable. So $E$ cannot be $\s$-countably compact
unless it is countable. And if $E$ is stationary, it has limit points in
the closure of every uncountable subset of $\w_1$, and so it is $\w_1$-compact.
It therefore follows from Theorem \ref{maps} that any quasi-perfect preimage of
$E$ is $\w_1$-compact, but not $\s$-countably compact.
\bigskip

Problem 8 was motivated by a theorem in 13.4 of \cite{vD2},
which stated
that the preimages it asks for do exist if $\frak b = \frak c$.
The preimages van Douwen  constructed were locally countable. But it is
also consistent that some of them are not normal:

\begin{thm} \label{P} Let $X$ be a locally compact, locally countable,
quasi-perfect preimage of the space $\P$ of irrationals with
the relative topology.
If the PFA holds, then $X$ is not normal.\end{thm}

\begin{proof} Let $p \in \P$ and let $\pi: X \to \P$ be a
surjective quasi-perfect map. Let $\{\s_\a : \a < \w_1\}$ be a
family of injective sequences in $X$ with
disjoint $\pi$-images that converge to $p$.  Let $A_\a$ be the set of all
limit points of $\s_\a$. Clearly $A_\a \subset \pi^\from \{p\}$,
so $A_\a \cup ran(\s_\a)$ is a separable, closed,
countably compact subspace of $X$.
\medskip

\noindent{\it Case 1. $A_\a$ is uncountable for some $\a$.}
Then  $ran(\s_\a)$ is a countable subspace of $A_\a \cup ran(\s_\a)$
 that does not have compact closure, but now Lemma \ref{conj} shows
that $A_\a \cup ran(\s_\a)$, and hence $X$, is not normal.
\medskip

\noindent{\it Case 2. $A_\a$ is countable for all $\a$.}
By induction, build sequences $\tau_\a (\a < \w_1)$ in $X$
whose $\pi$-images
converge to $p$, and  such that $ran(\s_\beta) \subset^* ran(\tau_\a)$
for all $\beta < \a$. If some $\tau_\a$ has uncountably many limit points,
argue as in Case 1. Otherwise, the set $C_\a$ of limit points of each $\tau_\a$
is compact and countable, so
it is contained in a countable, compact, open subset $V_\a$ of $X$.
Let $V =\bigcup_{\a < \w_1} V_\a$.
  Since the $C_\a$ form an increasing chain, their
union $C$ is $\w$-bounded, hence countably compact, hence closed in
$X$ by first countability of $X$. However, inasmuch as
 the $\s_\a$ have disjoint ranges, $\pi [N\setminus C]$ is uncountable for
any neighborhood $N$ of $C$. And so, the $\pi$-image of
every closed neighborhood of $C$
is an uncountable, closed subset of $\P$; hence
it must be of cardinality $\c$. But $|V| = \A_1$, so $V$ cannot
contain a closed neighborhood of $C$, contradicting normality.
\end{proof}

The italicized consequence of $\frak p > \aleph_1$ in the proof of Lemma
\ref{conj} is
actually equivalent to it. The familiar Franklin-Rajagopalan space
$\gamma \Bbb N$ has a countable dense set of isolated points, and is normal,
locally compact, and locally countable, hence first countable. It
can be made countably compact if, and only if, $\frak p = \aleph_1$.
In \cite{N1a} there is an extended treatment of when $\gamma \Bbb N$
can be hereditarily normal. Theorem \ref{Tri3} excludes $MM(S)[S]$ models,
but we do not know whether these models can be substituted for the
PFA in Theorem \ref{P}. In these models, $\frak p = \aleph_1$ while
$\frak b = \c = \aleph_2$.

\bigskip

In the absence of $\frak  b = \frak c$, we still have very little
idea of which first countable spaces have locally compact
quasi-perfect preimages. It is even unknown whether there is a model
of $\b < \c$ in which  the space
$\Bbb P$ of irrationals has such a preimage, or whether there
is a model in which it does not have one. It is also unknown
whether there is a  locally compact, locally countable, quasi-perfect
preimage of $[0,1]$ if $\b < \c$. Such a space would solve the following problem:
\medskip

\noindent
{\bf Problem 9.} Is there a ZFC example of a scattered,
countably compact, regular space that can be mapped
continuously onto $[0,1]$?
\medskip

See \cite{N5} for discussion of this problem, including an
explanation why the answer is affirmative if ``regular''
is omitted.

\bigskip

\end{document}